\theoremstyle{plain}
\newtheorem{theo}{Theorem}[section]
\newtheorem*{theo*}{Theorem}
\newtheorem{prop}[theo]{Proposition}
\newtheorem{lemm}[theo]{Lemma}
\theoremstyle{definition}
\newtheorem{rema}[theo]{Remark}
\DeclareMathOperator{\diff}{d}
\DeclareSymbolFont{pletters}{OT1}{cmr}{m}{sl}
\DeclareMathSymbol{s}{\mathalpha}{pletters}{`s}
\DeclareMathOperator*{\argmin}{arg\,min}
\def \Cof {\operatorname{Cof}}
\def\ba{\begin{align}}
\def\bad{\begin{aligned}}
\def\be{\begin{equation}}
\def\ea{\end{align}}
\def\ead{\end{aligned}}
\def\ee{\end{equation}}
\def \D{\diff\! }
\def\dx{\diff \! x}
\def\le{\leq}
\def \I{\textrm{I}}
\def \Tau{\mathbf{T}}
\def \Id{\textrm{Id}}
\def \bt{\mathbf{\boldsymbol{\tau}}}
\def \bH{\mathbf{H}}
\def \bn{\mathbf{n}}
\def \bx{\mathbf{x}}
\def \by{\mathbf{y}}
\def \bu{\mathbf{u}}
\def \bU{\mathbf{U}}
\def \bv{\mathbf{v}}
\def \bpsi{\mathbf{\psi}}
\def \R{\mathbb{R}} 
\def \bI{\mathbf{I}}
\def \N{\mathbf{N}}
\def \Hu {\mathbf{H}_\varphi}
\def \Hup {\widetilde{\Hu}}
\def\BV{\mathrm{BV}}
\def\Div{\mathrm {div}\,}
\numberwithin{equation}{section}
\title{Implicit like time discretization for the one-phase Hele-Shaw problem with surface tension}
\author{Ido Lavi \thanks{Departament de F\'isica de la Mat\`eria Condensada, Universitat de Barcelona, Avinguda Diagonal 647, 08028 Barcelona, Spain},
Nicolas Meunier \thanks{LaMME, UMR CNRS 8071, Universit\'e \'Evry Val d’Essonne, France,  nicolas.meunier@univ-evry.fr},
Olivier Pantz \thanks{La­bo­ra­toire Jean Alexandre Dieu­don­n\'e, UMR CNRS 7351,  Universit\'e de Nice, olivier.pantz@unice.fr }}
\begin{document}

\maketitle

\begin{abstract}
In this work, we propose and compare three numerical methods to handle the one-phase Hele-Shaw problem with surface tension in dimension two by using three variational approaches in the spirit of the seminal works \cite{Otto, Gia_Otto}. 
\end{abstract}

%


\section{Introduction}
Consider the classical experiment: a droplet of viscous fluid  is trapped between two narrowly spaced horizontal glass plates (the Hele-Shaw cell see Fig. \ref{fig:H-S}). Because of this special geometry, the motion of the viscous fluid is strongly overdamped; it is assumed to be governed by Darcy's law. The effect of surface tension at the interface is such that the droplet is at rest if and only if its cross-section is circular.
\begin{figure}
	\begin{center}
		\includegraphics[scale=0.16]{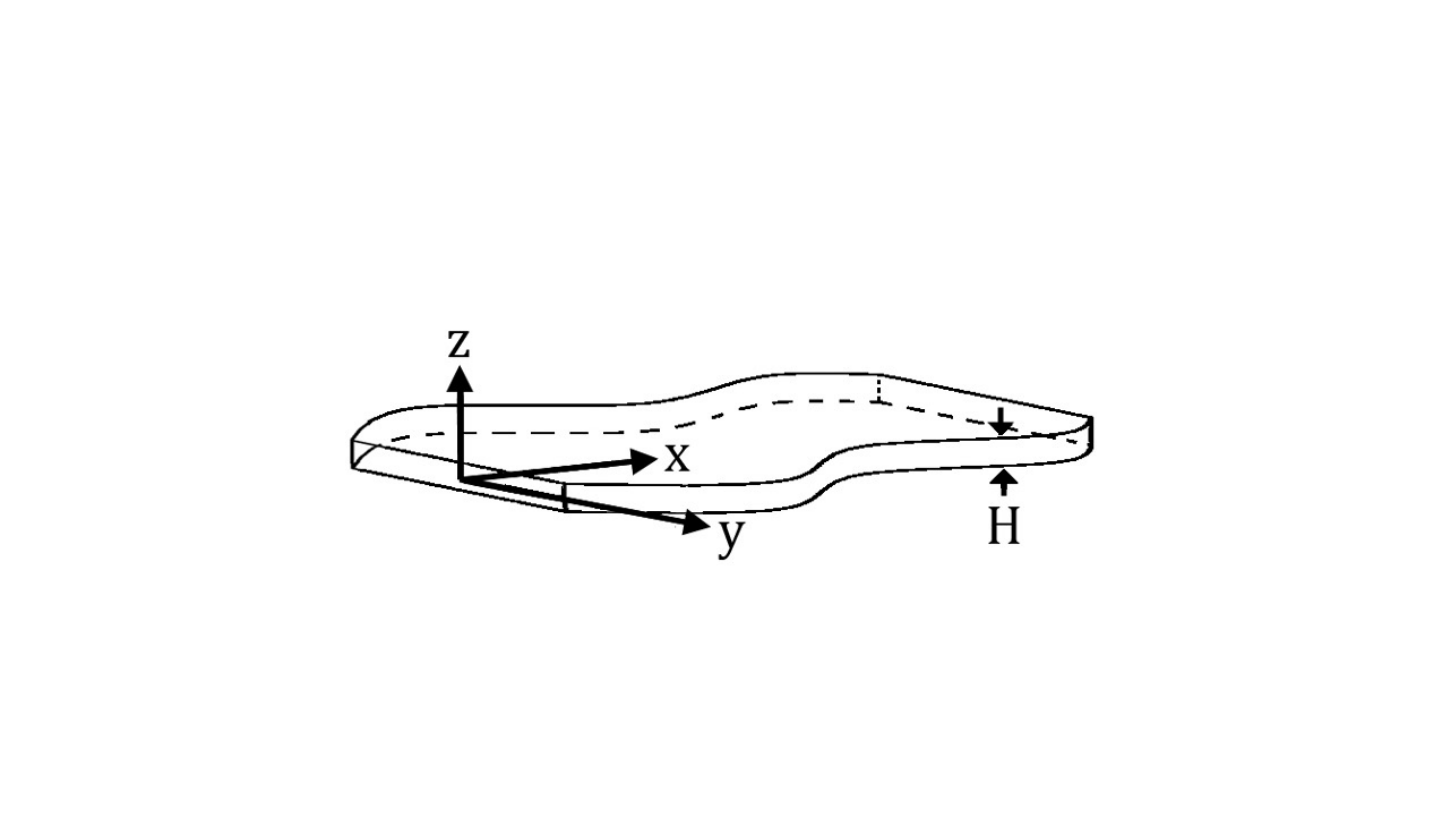}
		\caption{Viscous droplet confined between two parallel plates.}
		\label{fig:H-S}
	\end{center}
\end{figure}

In equations, the continuous time moving-boundary-value problem associated with the confined droplet is the following. Let $\Omega_0=\Omega(t=0)$ be a given smooth (convex and bounded) domain of $\mathbb{R}^2$. For all $t > 0$, the velocity $\bu(t, \bx)$, the pressure $p(t, \bx)$ and the domain describing the droplet $\Omega(t)$ together with its boundary $\Gamma(t):=\partial \Omega(t)$ satisfy
\[
\bu + \nabla p=0,
\]
with $p$ solution of 
\begin{equation}\label{eq:Laplace_mbv}
	\begin{cases} 
		\Delta p=0 & \text{in }\Omega(t) \\
		p=\sigma\kappa & \text{on }\Gamma(t)
	\end{cases}
\end{equation}
together with the kinematic condition, stating that the sharp interface is transported by the fluid 
\begin{equation}\label{eq:kin}
V_\text{n}=V_{\Gamma(t)}\cdot \bn=\bu\cdot\bn \quad \text{on }\Gamma(t),
\end{equation}
where $\kappa$ is the (mean) curvature (positive for a circle) of the evolving free-boundary $\Gamma(t)$, $V_n$ and $\bn$ are the normal velocity field and the outer unit normal field over $\Gamma(t)$, and $\sigma > 0$ the surface tension is a given constant.  

Equations \eqref{eq:Laplace_mbv} -- \eqref{eq:kin} describe a situation where a partial differential equation is solved for an unknown function, the fluid velocity $\bu(\bx,t)$ and the pressure $p(\bx,t)$, but at the same time the exact extent of the computational domain is also unknown. This is a free boundary problem. Consequently, one of the computational tasks will be to treat the domain shape throughout the simulation.

It is well-known that the perimeter $\mathcal P(\Omega)$ is a Liapunov functional for the evolution problem \eqref{eq:Laplace_mbv} -- \eqref{eq:kin}, see \cite{AMS} e.g., that is 
\[
\frac{\D }{\D t} \mathcal P(\Omega(t))= -\frac1\sigma \int_{\Omega(t)} |\nabla p|^2 \dx.  
\]
Furthermore, in \cite{Otto, Gia_Otto}, it was proved that the evolution problem \eqref{eq:Laplace_mbv} -- \eqref{eq:kin} can be understood as the gradient flow of the perimeter. It can be approximated by  the so-called JKO scheme, which, starting from an initial data $\Omega_0$, is given by a sequence $k \in \N$ of variational problems of the form
\begin{align}
	& \Omega^\tau_{(k+1)} \textrm{ minimizes }\nonumber \\
	& 
	\qquad \qquad \qquad W_2^2(\chi_\Omega,\chi_{\Omega^\tau_{(k)}})   + 2\tau \mathcal P(\Omega) 
	\nonumber \\
	& \qquad \qquad \qquad \mbox{ among all the sets of finite perimeter with }  |\Omega|=|\Omega_0|, 
	\label{eq:JKO}
\end{align}
where $k$ is the time step, $\tau$ is its size,  $\chi_{\Omega} \in \BV (\Omega ; \{0,1\})  $ is the characteristic function of the set $\Omega$, that is
\[
\chi_{\Omega}(x)=
\begin{cases}
	1 & \textrm{ in }  \Omega\, ,\\
	0 & \textrm{ else}\, ,
\end{cases}
\]
$|\Omega|$ is the area of $\Omega$ and $W_2$ is the usual Wasserstein distance, defined by
\begin{eqnarray}\label{def:wasserstein}
	W^2_2\left( \chi_{\Omega}-\chi_{\Omega'} \right) &=&  \inf_{\pi \in \Pi (\mu , \nu ) } \iint _{\Omega \times \Omega} |x - y|^2\D\pi (x, y)\nonumber \\
	&=&\inf \int_{\Omega} |x- T(x) |^2 \D x 
\end{eqnarray}
where $\Pi (\mu , \nu )$ denotes the set of all probability measures $\pi \in \mathbb P(\Omega \times \Omega)$ with marginals $\mu$ and $\nu$ and where in the second line the infimum runs over all transport maps, i.e., measure preserving  $T: \R ^2 \to  \R^2$ such that $T _{\sharp \chi _{\Omega}} = \chi_{\Omega '}$, that is
\[
\int_{\R^2} T(x)  \chi_{\Omega}(x)\D x=\int_{\R^2} \chi_{\Omega'}(y)\D y\, .
\]

Under the assumption that the perimeter $\mathcal P(\Omega_k)$ converges toward the perimeter $\mathcal P(\Omega)$, by using the JKO scheme defined by (1.14) in \cite{Gia_Otto}, it was possible to prove the existence of weak solution to \eqref{eq:Laplace_mbv} -- \eqref{eq:kin}, see \cite{Otto, Gia_Otto}. Therefore, the natural numerical method would be to use this scheme. But it is very nonlinear. In this work, we present numerical discretziation schemes by using sequences of variational problems in the spirit of \eqref{eq:JKO} but which are less nonlinear and we compare their results.
From a practical point of view, there are several difficulties to solve:
\begin{enumerate}
\item Evaluation of the Wasserstein distance.
\item Which discretization in space?
\item How to impose the incompressibility constraint?
\end{enumerate}

Throughout this work, we will consider smooth domains and we will use for the Wasserstein distance the formulation that is based on the minimization of a diffeomorphism between the two forms whose distance we want to evaluate.

Let us give a brief overview of the content of this article. In Section \ref{sec:diff_scheme} we present the different time discrete problems associated with \eqref{eq:mbv} -- \eqref{eq:kin_mod}.  In Section \ref{sec:def}, we recall some basic facts that will be useful. In Sections  \ref{sec:explicit} and \ref{sec:var_2}, we study the explicit scheme and \eqref{pb:well-posed} -- \eqref{pb:well-posed_2}. In Section \ref{sec:stab_scheme} we study \eqref{eq:Otto_3} -- \eqref{eq:func_J_alpha} and in Section \ref{sec:nonlinear} we study \eqref{eq:Otto_4}. Finally we end with numerical simulations in Section \ref{sec:verification}.

\section{Different time-discretisations of \eqref{eq:mbv} -- \eqref{eq:kin_mod}} \label{sec:diff_scheme}

In free boundary problems, representations of interface and bulk fields are coupled through: 
(i) interface kinematics: the transport of the Lagrangian or Eulerian interface description by the Eulerian velocity field, 
or (ii) interface dynamics: problems with boundary (or jump) conditions associated with the sharp interface. 
The solution of the interface kinematic problem has seen major progress in the past twenty years (see \cite{Peskin, Hirt}). However, the situation for interface dynamics, and surface tension in particular, is more complex and a wide range of methods or their combinations have been proposed, see, e.g., \cite{Gallinato}.

Typically, Hele-Shaw flow problems with a sharp moving interface are solved numerically (if not analytically) by taking advantage of elegant meshless techniques such as conformal mapping and the vortex-sheet method (see \cite{Dallaston} for a review). 

However, to simulate the type of problem that is involved in some recent developments in mathematical biology, in which the fluid flow ($\bu$) is coupled to the bulk dynamics of some concentration field ($c$), see e.g. \cite{Aranson, AlaMaMeu, Berlyand2, Berlyand_2021, Blanch,  CMM2, LMVC}, 
it is crucial to determine, at each time step, the deformed geometry on which $c$ and $\bu$ are defined. In principle, one could avoid explicitly tracing the interface by using a phase-field (or level-set) method, but this entails formulating a new model which would only approximate our equations of motion at a computationally-expensive limit \cite{CMM1}. 
In our attempt to maximize precision at efficient computation costs, we chose to build a dynamic-interface simulation based on the finite element method (FEM), which is known as one of the most powerful numerical techniques for solving PDEs on arbitrary domains. In a further work \cite{LMP2}, we will apply the three methods described here to the problems described in \cite{Aranson, AlaMaMeu, Berlyand2, Berlyand_2021, Blanch,  CMM2, LMVC}.

Rewriting the equation  \eqref{eq:Laplace_mbv} in a slightly different form
\begin{equation}\label{eq:mbv}
	\begin{cases} 
		\bu+\nabla p=0 & \text{in }\Omega(t) \, ,\\
		\nabla\cdot\bu=0 & \text{in }\Omega(t) \, ,\\
		p=\sigma\kappa & \text{on }\Gamma(t)\, ,
	\end{cases}
\end{equation}
allows us to derive a mixed variational formulation of the PDE model in which the pressure boundary condition is included as a natural boundary condition.

Let us first detail the role played by the normal component of the velocity on the boundary. 
Let $ \mathbf{X} ( \cdot , t) : I \to \Gamma(t)$ be a parameterization of $\Gamma(t)$.
The kinematic condition \eqref{eq:kin} writes
\begin{equation}\label{eq:kin_normal}
	\begin{cases}
		\mathbf{X}(\cdot, t) &= \Gamma(t)\, ,\\
		\frac{\D \mathbf{X}(s, t)}{\D t}&= \left(\bu \left( \mathbf{X}(s, t) \right) \cdot \bn \left( \mathbf{X}(s, t) \right)   \right) \bn \left( X(s, t) \right)  + f( s, t ) \bt \left( X(s, t) \right)\, ,
	\end{cases}
\end{equation}  
where $s$ is the parameterization variable, $I$ is the parameterization interval, $\bn$ is the outwards pointing normal vector of $\Gamma$, $\bt$ is the tangent vector, and $f( s, t )$ is any smooth function. According to \eqref{eq:kin_normal}, the time-varying set of points $\Gamma(t)$ only depends on the normal component of the velocity ($\bu\cdot\bn$). Indeed, any movement along the tangent $\bt$ simply serves to re-parameterize $\Gamma$. In other words, the shape of the deformed interface is determined by the normal velocity alone. Therefore, the tangential flow component on the boundary ($\bu\cdot \bt$) is completely irrelevant for the continuous problem. In fact, we can modify the equation of motion of $\Gamma(t)$ so that it is
\begin{equation}\label{eq:kin_mod}
	\frac{\D \mathbf{X}(s, t)}{\D t}= \bu \left( X(s, t)\right) .
\end{equation}  
Much of the computational accuracy and stability depends on the discretization of the term responsible for the surface tension force in \eqref{eq:mbv}. For most numerical approaches  the evaluation of the curvature $\kappa \bn$ is very difficult as it contains second derivatives.

\subsection{Explicit time-discretisation of \eqref{eq:mbv} -- \eqref{eq:kin_mod}}

Knowing the domain configuration at $t = t_n$, and seeking the solution at $t_{n+1}$ of \eqref{eq:mbv}, an explicit treatment of the surface tension term implies that the curvature vector $ \bH:=\kappa \bn$ is obtained using the known domain configuration $X(\cdot,t_{n})$.  For simplicity, we only consider one time-step $\Delta t>0$ of the time-discrete problem. This allows us to drop the time index notation and we omit the superscript $i$ for brevity.

More precisely, let $\Omega$ and $\Gamma$ be the domain and its interface at time $t_n=n\Delta t$, $n \in \N$. The explicit time-discrete version of \eqref{eq:kin} -- \eqref{eq:mbv} is: find $\bu\in \left(L^2(\Omega)\right)^2$ and $p\in H^1 (\Omega)$ solutions of
\begin{equation}\label{eq:mbv discrete_explicit}
	\begin{cases} 
		\bu+\nabla p=0 & \text{in }\Omega\, , \\
		\nabla\cdot\bu=0 & \text{in }\Omega\, ,\\
		p\bn=\sigma
		\bH& \text{on }\Gamma\, ,
	\end{cases}
\end{equation}
where $\bH$ and $\bn$ are the curvature vector and the unit outwards normal vector of $\Gamma$.

According to the kinematic condition, \eqref{eq:kin}, the new domain is defined by
\begin{equation}\label{eq:discrete propagation}
	\Omega_{\varphi}=(\Id+\Delta t \, \bu)(\Omega):=\varphi(\Omega) \, ,
\end{equation}	
and 
\begin{equation}\label{eq:discrete propagation_boundary}
	\mathbf{X}_{\varphi}(s):= \varphi  \left(\mathbf{X}(s)\right) \, ,
\end{equation}
where $\bu$ has been extended to $\overline{\Omega }$. 
Note here that $s=\left(\mathbf{X}\right)^{-1}( \mathbf{x})$ for all $\mathbf{x} \in \Gamma $, hence \eqref{eq:discrete propagation_boundary} re-writes as
\begin{equation}\label{eq:discrete propagation_boundary_2}
	\mathbf{X}_{\varphi}\left( \left(\mathbf{X}\right)^{-1}( \mathbf{x})\right) = \mathbf{x} +\Delta t \, \bu(\mathbf{x}) \, , \quad \textrm{ for all } \mathbf{x} \in \Gamma \, .
\end{equation}

The use of a finite time-step for the new position of the interface introduces a time-discretization error (that is a difference between $\mathbf{X}_{\varphi}$ given by \eqref{eq:discrete propagation_boundary} and the exact position where the interface "should" be for the continuous time case \eqref{eq:kin}) which is directly related to the values of $\Delta t$ and of the velocity $\bu$.

In \eqref{eq:mbv discrete_explicit}, the unknowns ($\bu$, $p$) are explicit, meaning that their computation is done from the geometry of the previous time, that is $\Omega$. 

The solution to \eqref{eq:mbv discrete_explicit} is the solution of the minimization problem 
\[
\inf _{\bu \in V(\Omega)} J_{\rm explicit}(\bu), 
\]
where 
\begin{equation}
	J_{\rm explicit}(\bu)=\frac{1 }{2}\int_\Omega |\bu|^2 \D x +\frac{\sigma}{\Delta t} \mathcal P (\Omega)+ \sigma \langle\mathcal P' (\Omega), \bu \rangle \, ,
	\label{eq:functional}
\end{equation}
and $V(\Omega)$ is the functional space
\begin{equation}\label{def:V}
	V(\Omega) = \left\{  \bu\in \left(L^{2} \left(\Omega \right) \right) ^2 \textrm{ such that } \Div\,  \bu=0
	\right\}\, .
\end{equation}

\subsection{Other formulations}

The main result of the present work is the construction of numerical solutions in the spirit of the implicit time discretization \eqref{eq:JKO} proposed in \cite{Otto,Gia_Otto}. To do so, we consider three different directions: a fully boundary minimization problem, a curl penalization and a fully non-linear treatment of the incompressibility constraint.

\subsubsection{Boundary variation problem}

The boundary minimization problem is:
\begin{equation}\label{pb:well-posed}
	\inf_{\psi\in BV(\Gamma)} J_{\rm mod}(\psi),
\end{equation}
with
\begin{equation}\label{pb:well-posed_2}
J_{\rm mod}(\psi) :=\|\psi\cdot \bn \|_{H^{-1/2}(\Gamma)} + V\left((\Id +\Delta t \, \psi)(x)\right),
\end{equation}
where $V(\cdot)$ is the total variation and $BV$ is the space of functions with bounded variations.

\subsubsection{A problem with curl penalization}
 
The minimization problem with a curl penalization is:
\begin{equation}\label{eq:Otto_3}
	\inf _{\bu \in V(\Omega) } J_\alpha (\Omega,\bu)\, ,
\end{equation}
with $\alpha >0$ and
\begin{equation}\label{eq:func_J_alpha}
	J_\alpha (\Omega,\bu)= \frac 1 2 \int_\Omega |\bu|^2 \, \dx + \alpha \int_\Omega |\nabla \wedge \bu|^2 \, \dx  + \frac \sigma{\Delta t} \mathcal P\left((\Id+\Delta t \, \bu)(\Omega) \right)\, .
\end{equation}

\begin{rema}
	As we will see below, one can not take $\alpha=0$. Indeed the minimization problem 
	\begin{equation}\label{eq:Otto_2}
		\inf _{\bu \in V(\Omega )} J_0(\Omega,\bu)\, ,
	\end{equation}
	is ill-posed. 	
\end{rema}	
\subsubsection{A fully  nonlinear problem}

The fully nonlinear minimization problem is
\begin{equation}\label{eq:Otto_4}
	 \inf _{\bu \in W(\Omega) } J(\Omega,\bu)\, ,
\end{equation}
where $W(\Omega) $ is the functional space
\begin{equation}\label{espace:incompressible}
W(\Omega) = \left\{ \bu\in \left(H^{1} \left(\Omega \right) \right) ^2 \textrm{ such that } \det\left( \Id + \Delta t \,  \nabla \bu \right)=1 \text{ in }\Omega
\right\}\, .
\end{equation}

\section{Some material}\label{sec:def}

In this part, we recall some very classical facts on differential geometry, on eulerian derivatives and on functions with bounded variations that will be used later on.

\subsection{Computation of the first variation of the deformed permiter}

In order to define the eulerian derivative of the perimeter we embed problem \eqref{eq:mbv}  into a family of perturbed problems which are defined on perturbations of a $\mathcal C^{2,1}$ reference domain $\Omega$ constructed by perturbing the identity.

 Let $U$ be a convex bounded domain of class $\mathcal C^{2,1}$ such that $\bar \Omega \subset U$ and let
 \begin{equation}\label{def:esp_def}
 \mathcal S=\{\bu \in \mathcal C^{2,1}(\bar U ,\R^2)\, :\, \bu =0 \textrm{ on } \partial U  \textrm{ and } \bu \cdot \bn =0 \textrm{ on } \Gamma \} 
 \end{equation}
 be the space of feasible deformation fields endowed with the natural norm in $C^{2}(\bar U ,\R^2)$. 
 For a fixed field $\bu \in \mathcal S$ and for all $\Delta t>0$ define the mapping from $\bar U$ to $\R^2$ by  $\varphi=\Id+\Delta t \bu$. For $\Delta t$ sufficiently small $\varphi$ defines a family of $\mathcal C^2$-diffeomorphisms of $U$ onto itself. For such $\Delta t$ one sets
\begin{equation}\label{eq:discrete propagation_10}
	\Omega_\varphi=\varphi(\Omega)=(\Id+\Delta t \, \bu)(\Omega) \subset \R^2\, ,  
\end{equation}
and
\[
\Gamma_{\varphi} = \varphi(\Gamma),
\] 
hence $\Omega_0 = \Omega$, $\Gamma_0 = \Gamma$ and $\partial \varphi (\Omega):=\varphi(\partial \Omega)$.

We define the matrix  $\nabla \varphi$ by $\left(\nabla \varphi \right)_{ij}=\partial _j \varphi_i$ for $1\le i,j\le 2$.
This matrix represents the differential of $\varphi$ in the sense that $\varphi (\bx + h) - \varphi(\bx) = \nabla \varphi(\bx)h + o(\|h\|)$ for $\bx$ and $\bx + h$ in $\Omega$.

\subsubsection{On the curvature vectors of $\partial \Omega$ and of $\varphi (\partial \Omega)$}

Let $\bt$ and $\bt_\varphi $ denote the tangent vectors along $\partial \Omega$ and $\partial \varphi (\Omega)$ respectively, and let $\Tau$ represent the tangent vector on $\partial \varphi (\Omega)$ "pulled back" to the reference domain $\partial \Omega$. In other words, the input of $\Tau$ is any $\mathbf{x}$ on $\partial \Omega$ and the output is $\bt_{\varphi}(\mathbf{x}_\varphi)$ where $\mathbf{x}_\varphi:=\bx+\Delta t \, \bu(\bx)$ and 
\begin{equation}
	\Tau:=\bt_\varphi \circ \varphi =\frac{\nabla \varphi\, \bt}{|\nabla \varphi \,\bt|} =\frac{(\I+\Delta t \nabla\bu)\bt}{|(\I+\Delta t \nabla\bu)\bt|} \, ,\label{eq:Tau i+1}
\end{equation}
where  $\I$ represents the $2\times2$ identity matrix and we use the notation $\nabla \varphi\, \bt = (\bt\cdot\nabla)\varphi$.

Let $\mathbf{X}(s)$ for $s\in I$ be a parameterization of $\partial \Omega$ and $\mathbf{X}_\varphi(s)$ be a parametrization of the interface $\partial \varphi (\Omega)$:
\begin{equation}
	\mathbf{X}_\varphi(s)=\varphi (\mathbf{X}(s))=\mathbf{X}(s)+\Delta t \, \bu\left(\mathbf{X} (s)\right) \, .\label{eq:s parameterization}
\end{equation}

Using differential geometry tools \cite{Walker}, the curvature vectors $\bH:= \kappa\bn$ on $\partial \Omega$ and $\bH_\varphi:= \kappa_\varphi\bn_\varphi$ on $\partial \varphi (\Omega)$ are defined by
\[
\bH=-|\mathbf{X}^{'}(s)|^{-1}\frac{\D }{\D s}\bt(\mathbf{X}(s)) =- \nabla \bt \, \bt\, ,
\]
\[
\bH_\varphi=-|\mathbf{X}_\varphi^{'}(s)|^{-1}\frac{\D }{\D s}\bt_\varphi(\mathbf{X}_\varphi(s)) = -\nabla \bt_\varphi\, \bt_\varphi\, .
\]

Define the vector $\Hup$ by
\begin{equation}
	\begin{aligned}
		\Hup
		&=-|\mathbf{X}^{'}(s)|^{-1} \frac{\D }{\D s} \left(\Tau(\mathbf{X}(s))
		\right)=-\nabla \Tau\, \bt \, .
	\end{aligned} \label{eq:Implicit curvature vector}
\end{equation}

Let $\Cof \nabla\varphi$ be the cofactor matrix associated with the gradient matrix $\nabla\varphi$. Since
$$
\Cof \nabla\varphi^T \nabla\varphi=\det(\nabla \varphi) \Id \, ,
$$
we see that 
$$
\Hup=|\Cof \nabla \varphi| \, \bH_{\varphi}\circ \varphi\, .
$$

\subsubsection{First variation of the perimeter $\mathcal P \left(\varphi (\Omega)\right)$}

The Eulerian derivative of the permiter $\mathcal P$ at $\Omega$ in the direction $\bu$ is defined as
\begin{equation}\label{def:shape_der_per}
	\langle  \mathcal P' \left(\Omega\right); \bu \rangle  :=\lim_{\Delta t \to 0}\frac{\mathcal P \left(\varphi (\Omega)\right)-P \left(\Omega\right) }{\Delta t}\, .
\end{equation}

In order to evaluate the first variation of $P(\Omega_\varphi)$ we transform $P(\Omega_\varphi)$ to an integral over the reference domain $\Omega$. The general strategy we use in this work is adapted from the one used for shape derivative calculations. It consists in transferring the problem on the original boundary before writing the variational problem.

\begin{lemm}\label{lem:shape_der_per}
	The first variation of the perimeter 
	$\mathcal P \left(\varphi (\Omega)\right)$ is given by
	\begin{equation}\label{eq:der_per}
		\langle \mathcal P' \left(\varphi (\Omega)\right); \bv \rangle = \Delta t \int_{\partial \varphi(\Omega)} \bH_{\varphi}\cdot (\bv\circ \varphi^{-1}) \D s _\varphi= \Delta t \int_{\partial\Omega} \Hup \cdot \bv \D s \, .
	\end{equation}
\end{lemm}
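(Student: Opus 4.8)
First I would pin down the meaning of the statement. Here $\Delta t$ is the \emph{fixed} time step, $\varphi=\Id+\Delta t\,\bu$ is a fixed $\mathcal C^2$–diffeomorphism of $U$, and the symbol $\langle\mathcal P'(\varphi(\Omega));\bv\rangle$ stands for the derivative obtained by perturbing the field defining $\varphi$: for $\bv$ in (a dense subspace of) the admissible class and $|\lambda|$ small, set $\varphi_\lambda:=\varphi+\lambda\,\Delta t\,\bv=\Id+\Delta t\,(\bu+\lambda\bv)$, still a $\mathcal C^2$–diffeomorphism of $U$ onto itself, and put $\langle\mathcal P'(\varphi(\Omega));\bv\rangle:=\frac{\D}{\D\lambda}\big|_{\lambda=0}\mathcal P\big(\varphi_\lambda(\Omega)\big)$. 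Viewed on $\varphi(\Omega)$ this is the classical Eulerian derivative of the perimeter in the direction of the field $\Delta t\,(\bv\circ\varphi^{-1})$ living on $\varphi(\Omega)$, which accounts for the middle term of \eqref{eq:der_per}. The plan is: (1) prove the right-most equality directly on $\partial\Omega$ by differentiating the length functional and integrating by parts; (2) obtain the middle term from the right-most one by a change of variables through $\varphi$.

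For (1), fix an orientation of $\partial\Omega$ (a finite disjoint union of closed $\mathcal C^{2,1}$ curves) and parameterize it by arc length $\mathbf X(s)$, so $\mathbf X'(s)=\bt(\mathbf X(s))$. Since $\varphi_\lambda$ is a diffeomorphism, $s\mapsto\varphi_\lambda(\mathbf X(s))$ parameterizes $\partial\varphi_\lambda(\Omega)$, whence $\mathcal P\big(\varphi_\lambda(\Omega)\big)=\int_{\partial\Omega}\big|\nabla\varphi_\lambda\,\bt\big|\,\D s$ with $\nabla\varphi_\lambda\,\bt=\nabla\varphi\,\bt+\lambda\,\Delta t\,\nabla\bv\,\bt$. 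The integrand is smooth in $\lambda$ and, by compactness of $\partial\Omega$ and invertibility of $\nabla\varphi$, uniformly bounded away from $0$ for $|\lambda|$ small, so one may differentiate under the integral sign. Using $|\nabla\varphi\,\bt|^{-1}\nabla\varphi\,\bt=\Tau$ from \eqref{eq:Tau i+1} and $\nabla\bv\,\bt=(\bt\cdot\nabla)\bv=\frac{\D}{\D s}\big(\bv(\mathbf X(s))\big)$, I get
\[
\langle\mathcal P'(\varphi(\Omega));\bv\rangle=\Delta t\int_{\partial\Omega}\Tau\cdot\frac{\D}{\D s}\big(\bv\circ\mathbf X\big)\,\D s
=-\Delta t\int_{\partial\Omega}\Big(\frac{\D}{\D s}\Tau(\mathbf X(s))\Big)\cdot\bv\,\D s
=\Delta t\int_{\partial\Omega}\Hup\cdot\bv\,\D s,
\]
where the middle step is integration by parts along each closed component (no boundary terms, by periodicity) and the last uses $\frac{\D}{\D s}\Tau(\mathbf X(s))=\nabla\Tau\,\bt$ together with the definition \eqref{eq:Implicit curvature vector} of $\Hup$.

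For (2), change variables $y=\varphi(x)$, $x\in\partial\Omega$, in $\int_{\partial\varphi(\Omega)}\bH_\varphi\cdot(\bv\circ\varphi^{-1})\,\D s_\varphi$. The tangential Jacobian of $\varphi|_{\partial\Omega}$ is $|\nabla\varphi\,\bt|$, which coincides with the scalar $|\Cof\nabla\varphi|$ (in two dimensions $|\Cof\nabla\varphi\,\bn|=|\nabla\varphi\,\bt|$, since $\Cof A=J A J^{T}$ with $J$ the rotation by $\pi/2$), so $\D s_\varphi=|\Cof\nabla\varphi|\,\D s$; moreover $(\bv\circ\varphi^{-1})\circ\varphi=\bv$ and $\bH_\varphi\circ\varphi=|\Cof\nabla\varphi|^{-1}\,\Hup$ by the identity $\Hup=|\Cof\nabla\varphi|\,\bH_\varphi\circ\varphi$ recalled just above the lemma. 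The two Jacobian factors cancel and the middle term equals $\Delta t\int_{\partial\Omega}\Hup\cdot\bv\,\D s$, which closes the chain of equalities.

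No single step is a genuine obstacle; the care is concentrated in two places. First, fixing the precise meaning of $\langle\mathcal P'(\varphi(\Omega));\bv\rangle$ and justifying differentiation under the integral sign — smoothness in $\lambda$ of $|\nabla\varphi_\lambda\,\bt|$ and its uniform non-degeneracy for small $\lambda$, which is what lets one commute $\frac{\D}{\D\lambda}$ with $\int_{\partial\Omega}$ and then integrate by parts. Second, the geometric bookkeeping: verifying that the three quantities appearing in \eqref{eq:der_per} are normalized consistently, i.e. that the surface Jacobian $\D s_\varphi/\D s$, the length element $|\nabla\varphi\,\bt|$ and the scalar in $\Hup=|\Cof\nabla\varphi|\,\bH_\varphi\circ\varphi$ all agree. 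For $\bv$ of low regularity (merely $L^1$ or $BV$ on $\partial\Omega$, as in \eqref{pb:well-posed_2}) the integration by parts is replaced by the corresponding duality pairing, the smooth case above giving the formula on a dense set.
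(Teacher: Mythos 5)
Your proposal is correct and follows essentially the same route as the paper: expand $\mathcal P(\varphi_\lambda(\Omega))=\int_{\partial\Omega}|\nabla\varphi_\lambda\,\bt|\,\D s$ to first order in the perturbation of $\bu$, identify the linear term as $\Delta t\int_{\partial\Omega}\nabla\bv\,\bt\cdot\Tau\,\D s$, and integrate by parts along the closed boundary using $\Hup=-\frac{\D\Tau}{\D s}$. Your step (2), checking the middle equality by the tangential change of variables and the identity $\Hup=|\Cof\nabla\varphi|\,\bH_\varphi\circ\varphi$, is a worthwhile verification that the paper leaves implicit, but it does not change the nature of the argument.
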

\begin{proof}  
	Assume that the boundary $\partial \Omega$ is parametrized by the arc length $s$, we have 
	\begin{equation}\label{def:perm_def}
	\mathcal P \left(\varphi (\Omega)\right)=\int_{\partial \Omega} | \nabla \varphi \,\bt| \D s= \int_{\partial \Omega} |(\I+\Delta t \nabla \bu)\bt| \D s\, .
	\end{equation}

	Let us perform an asymptotic expansion of order one of $\mathcal P \left(\varphi (\Omega)\right)$. 
	We first find 
	\begin{eqnarray*}
		& &|(\I+\Delta t\, \nabla(\bu+\delta\bu))\bt|^2 \\
		&=&  |(\I+\Delta t\, \nabla\bu)\bt|^2+2\Delta t \nabla \delta \bu\, \bt \cdot (\bt+\Delta t \, \nabla\bu\, \bt) +O(||\nabla\delta\bu||^2)\, .
	\end{eqnarray*}
	Let $\D S= |(\I+\Delta t\, \nabla\bu)\bt|$, using the Taylor expansion of $\sqrt{\D S^2+x}$, we obtain
	\[
	|(\I+\Delta t\, \nabla(\bu+\delta\bu))\bt|=|(\I+\Delta t \nabla \bu)\bt|+\Delta t \nabla \delta \bu \,\bt\cdot \Tau +O(||\nabla \delta \bu ||^2)\, ,
	\]
	where we recall the definition of $\Tau$:
	\[
	\Tau=\frac{\nabla \varphi\, \bt}{|\nabla \varphi\, \bt|}=\frac{(\I+\Delta t \nabla \bu)\bt}{|(\I+\Delta t \nabla \bu)\bt|}\, ,
	\]
	and thus 
	\[
	\mathcal{P}((\Id+\Delta t \, \bu+\Delta t \delta\bu) (\Omega)) = \mathcal{P}(\varphi(\Omega)) + \Delta t \int_{\partial \Omega}  \nabla \delta \bu \,\bt\cdot \Tau \D s +O(||\nabla \delta \bu||^2)\, .
	\] 
	Consequently, for $\mathbf{v}: \partial \Omega \to \R^2$ we have
	\begin{equation}
		\langle \mathcal P' \left(\varphi (\Omega)\right); \bv \rangle =\Delta t \int_{\partial \Omega}  \nabla \mathbf{v} \,\bt\cdot \Tau \D s \, .\label{eq:dPer 1} 
	\end{equation}

	Recalling that $s$ is the arc length coordinate of $\partial \Omega$, it follows that $\nabla\mathbf{v}\,\bt=\frac{\D \mathbf{v}}{\D s}$. Then, integrating \eqref{eq:dPer 1} by parts we obtain
	\[
	\langle \mathcal P' \left(\varphi (\Omega)\right); \bv \rangle =- \Delta t \int_{\partial \Omega}  \mathbf{v} \cdot \frac{\D \Tau}{\D s} \D s \, . \label{eq:dPer 1}
	\]
	
	Finally, since 
	$
	\Hup=-\frac{\D \Tau}{\D s}
	$ we deduce \eqref{eq:der_per}.
\end{proof}

\subsection{On functions with bounded variation}

We recall the definition of the total variation for a function $f \in L^1(\Gamma)$:
\[
V(f):=\int_\Gamma |\textrm{D} f| \D s= \sup \left\{ \int_\Gamma f \, g' \D s\, ;\,  g \in  \mathcal C^1(\Gamma,\R^2)\, , \, |g(s)| \le  1 \textrm{ for all }s \in  \Gamma   \right\},
\]
where $f'$ is the derivative of $f$.
The space of functions with bounded variation is then defined as
\[
BV(\Gamma) = \left\{f \in  L^1(\Gamma)\, ;\, 
\int_\Gamma |\textrm{D}  f| \D s< \infty \right\},
\]
and is equipped with the norm
\[
\|f\|_{BV(\Gamma)}=\|f\|_{L^1(\Gamma)}+ \int_\Gamma |\textrm{D} f| \D s.
\]

\begin{rema}
	If $f \in W^{1,1}(\Gamma)$, this coincide with the usual $W^{1,1}(\Gamma)$ norm. However $W^{1,1}(\Gamma)$ is a proper subset of $BV(\Gamma)$, since the derivative of $BV$ functions are in general measures not functions in $L^1(\Gamma)$.
\end{rema}

We recall the following classical results for BV functions.
\begin{theo}
	Let $(\bu_k)_{k\in \N}$ be a bounded sequence in $BV(\Gamma)$. Then there exists a subsequence which converges strongly in $L^1(\Gamma)$.
\end{theo}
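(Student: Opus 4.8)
The plan is to deduce the statement from the Fr\'echet--Kolmogorov (Riesz) compactness criterion in $L^1$, whose only nontrivial hypothesis here will be a uniform $L^1$-modulus of continuity furnished by the total variation bound. First I would fix an arc-length parametrization, identifying $\Gamma$ with the circle $\R/L\mathbb{Z}$ with $L=\mathcal P(\Omega)$ the total length; under this identification $BV(\Gamma)$ and $L^1(\Gamma)$ become the classical spaces of $L$-periodic functions (working componentwise if $\bu_k$ is vector valued) and the translations $\tau_h f(s):=f(s+h)$ are well defined. Set $M:=\sup_{k}\|\bu_k\|_{BV(\Gamma)}<\infty$.

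The heart of the argument is the elementary estimate
\[
\|\tau_h f-f\|_{L^1(\Gamma)}\le |h|\,V(f)\qquad\text{for all }f\in BV(\Gamma),\ h\in\R.
\]
For $f\in C^1(\Gamma)$ this is immediate from $\tau_h f(s)-f(s)=\int_0^h f'(s+t)\,\D t$ together with Fubini; the general case follows by approximating $f$ in the strict sense by mollifications $f\ast\rho_\eps$ (for which $V(f\ast\rho_\eps)\le V(f)$ and $f\ast\rho_\eps\to f$ in $L^1$), or directly by pairing the measure $\textrm{D}f$ with translates of $\mathbf{1}_{[0,h]}$. Applying this with $f=\bu_k$ gives $\sup_k\|\tau_h\bu_k-\bu_k\|_{L^1(\Gamma)}\le |h|\,M\to 0$ as $h\to 0$, i.e. the family $\{\bu_k\}$ is equicontinuous in $L^1(\Gamma)$; since $\Gamma$ is compact and $\sup_k\|\bu_k\|_{L^1(\Gamma)}\le M$, the Fr\'echet--Kolmogorov theorem yields relative compactness of $\{\bu_k\}$ in $L^1(\Gamma)$, hence a subsequence converging strongly in $L^1(\Gamma)$.

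If one wishes to avoid quoting Fr\'echet--Kolmogorov, the same subsequence can be produced by hand: for each fixed $\eps>0$ the mollified functions $\bu_k\ast\rho_\eps$ are bounded in $C^1(\Gamma)$ by a constant depending only on $\eps$ and $M$, hence relatively compact in $C^0(\Gamma)$ by Arzel\`a--Ascoli and a fortiori in $L^1(\Gamma)$; extracting successively along a sequence $\eps_j\downarrow 0$ and diagonalizing gives a subsequence $(\bu_{k_m})_m$ with $(\bu_{k_m}\ast\rho_{\eps_j})_m$ convergent in $L^1$ for every $j$, and then the triangle inequality combined with $\|\bu_{k_m}\ast\rho_{\eps_j}-\bu_{k_m}\|_{L^1(\Gamma)}\le C\eps_j M$ shows $(\bu_{k_m})_m$ is Cauchy in the complete space $L^1(\Gamma)$. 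The only genuinely delicate point in either route is the translation (equivalently, the mollification) estimate for $BV$ functions, because $\textrm{D}f$ is a measure rather than an integrable function; all the remaining steps are soft functional analysis.
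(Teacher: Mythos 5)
Your argument is correct: the translation estimate $\|\tau_h f-f\|_{L^1(\Gamma)}\le |h|\,V(f)$, obtained first for smooth $f$ and then for general $BV$ functions by strict approximation, combined with Fr\'echet--Kolmogorov (or, equivalently, your mollification--Arzel\`a--Ascoli--diagonalization route) is the standard proof of $BV$ compactness, and every step checks out, including the identification of $\Gamma$ with $\R/L\mathbb{Z}$ and the componentwise treatment of the vector-valued case. Note that the paper itself offers no proof of this statement; it is merely recalled as a classical result, so there is no in-paper argument to compare yours against. The one point you rightly flag as delicate --- that $\textrm{D}f$ is a measure, so the translation bound must be transferred from smooth approximants rather than proved by a direct Fubini argument --- is handled correctly.
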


A crucial lemma in the study of BV functions is the following approximation result:
\begin{lemm}[(Approximation by $\mathcal C^\infty$ functions)]
	Let $\bu \in  BV(\Gamma)$. There exists a sequence $\bu_k \in  \mathcal C^\infty(\Gamma)$ such that $\bu_k \to  \bu$ in $L^1(\Gamma)$ and
	\[
	\int_\Gamma |\bu_k'| \D s \to  \int_\Gamma |\textrm{D} \bu_k| \D s.
	\]
\end{lemm}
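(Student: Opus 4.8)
The plan is to follow the classical mollification argument for $BV$ functions, adapted to the compact manifold $\Gamma$ (a closed smooth curve). First I would note that, since $\Gamma$ is a compact one-dimensional manifold without boundary, it is either a finite union of circles; working on each connected component separately, I may identify $\Gamma$ with $\R/L\Z$ for the appropriate length $L$, so that periodic convolution is well defined. Let $\rho \in \mathcal C^\infty_c(\R)$ be a standard nonnegative mollifier with $\int \rho = 1$ and $\supp \rho \subset (-1,1)$, and set $\rho_\eps(s) = \eps^{-1}\rho(s/\eps)$, periodized over $\R/L\Z$. Define $\bu_\eps := \rho_\eps * \bu$. Each $\bu_\eps$ is smooth on $\Gamma$, and the standard property of convolution gives $\bu_\eps \to \bu$ in $L^1(\Gamma)$ as $\eps \to 0$; choosing $\eps = \eps_k \to 0$ along a sequence yields the functions $\bu_k$.

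The key point is the convergence of the total variations. On one hand, lower semicontinuity of the total variation under $L^1$ convergence gives $\int_\Gamma |\mathrm{D}\bu| \le \liminf_k \int_\Gamma |\bu_k'|\,\D s$. On the other hand, for the reverse inequality I would compute $\bu_\eps'(s) = (\rho_\eps * \mathrm{D}\bu)(s)$ in the sense of distributions — i.e.\ the derivative falls on the (measure-valued) distributional derivative of $\bu$ — and then estimate
\[
\int_\Gamma |\bu_\eps'(s)|\,\D s = \int_\Gamma \Big| \int_\Gamma \rho_\eps(s-t)\,\D\mathrm{D}\bu(t)\Big|\,\D s \le \int_\Gamma \Big(\int_\Gamma \rho_\eps(s-t)\,\D s\Big)\,\D|\mathrm{D}\bu|(t) = \int_\Gamma |\mathrm{D}\bu|,
\]
using Fubini and $\int_\Gamma \rho_\eps(s-t)\,\D s = 1$. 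Hence $\limsup_k \int_\Gamma |\bu_k'|\,\D s \le \int_\Gamma |\mathrm{D}\bu|$, and combining with the lower semicontinuity bound gives $\int_\Gamma |\bu_k'|\,\D s \to \int_\Gamma |\mathrm{D}\bu|$, which is the claimed convergence.

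The main obstacle — really the only nonroutine issue — is the justification that differentiation commutes with periodic convolution at the level of measures, i.e.\ that $(\rho_\eps * \bu)' = \rho_\eps * \mathrm{D}\bu$ when $\mathrm{D}\bu$ is only a finite signed (vector) measure on $\Gamma$; this is a distributional identity that must be checked by testing against $\mathcal C^\infty(\Gamma)$ functions and invoking Fubini for the measure $\mathrm{D}\bu$ against the smooth kernel. A secondary bookkeeping point is the transition from a general smooth chart on $\Gamma$ to the arclength parametrization used to write the total variation; since all such parametrizations differ by a smooth diffeomorphism with bounded derivatives, this only affects constants and does not disturb the limit. Once these are in place, the statement follows.
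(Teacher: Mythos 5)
Your mollification argument is correct, and it is in fact the standard proof of this classical approximation result (essentially the Anzellotti--Giaquinta smoothing theorem); the paper itself states the lemma without proof, recalling it as classical material, so there is no in-text argument to compare against. Your observation that $\Gamma$, being the boundary of a smooth bounded convex domain, is a single closed curve identifiable with $\R/L\Z$ is exactly what makes the argument simpler than the usual Euclidean-domain case: global periodic convolution is available, so no partition of unity or boundary-layer correction is needed, and the two halves of your argument --- lower semicontinuity of the total variation under $L^1$ convergence, and the Fubini estimate $\int_\Gamma |(\rho_\eps * \mathrm{D}\bu)|\,\D s \le |\mathrm{D}\bu|(\Gamma)$ together with the distributional identity $(\rho_\eps * \bu)' = \rho_\eps * \mathrm{D}\bu$ --- combine correctly to give convergence of the total variations. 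One point worth making explicit: the statement as printed contains a typo, since the right-hand side of the displayed limit should read $\int_\Gamma |\mathrm{D}\bu|\,\D s$ (the total variation of the limit $\bu$), not $\int_\Gamma |\mathrm{D}\bu_k|\,\D s$; your proof correctly targets the intended quantity. Also note that since $\bu$ is $\R^2$-valued and the paper's definition of the variation uses vector-valued $\mathcal C^1$ test functions, your estimates should be understood for the vector measure $\mathrm{D}\bu$ with $|\mathrm{D}\bu|$ its total-variation measure, which your Fubini step handles without change.
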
 
\begin{rema}
	Note that we cannot expect to have
	\[
	\int_\Gamma |\bu_k'- \bu '| \D s \to  0,
	\]
	in general since that would imply that $\bu \in W^{1,1}(\Gamma)$.
\end{rema}

\section{Explicit time discretization of  \eqref{eq:kin} -- \eqref{eq:mbv}}\label{sec:explicit}

From the time-discrete equations \eqref{eq:mbv discrete_explicit} -- \eqref{eq:discrete propagation_boundary_2}, we derive the variational formulation by standard techniques.

Assuming that $\Omega$ is smooth enough, say $C^2$ e.g., the varitional formulation associated with \eqref{eq:mbv discrete_explicit} is: 
\begin{equation}\label{eq:FV}
	\begin{cases}
		\textrm{find } \bu\in \left(L^2(\Omega)\right)^2 \textrm{ and } p\in H^1 (\Omega) \textrm{ that solves} \\
		\int_{\Omega}\bu\cdot\mathbf{v} \, \dx +\int_{\Omega} \nabla p\cdot \mathbf{v} \, \dx 
		+ \sigma \int_{\Gamma}
		\bH^{i}
		\cdot \mathbf{v} \, \D s =0\, , \\
		  \int_{\Omega}(\nabla\cdot\bu)q \, \dx  =0\, ,\\
		 		\textrm{ for any smooth test functions } \bv:\Omega\to\R^2 \textrm{ s.t. } \Div \bv =0 \\ \textrm{ and } q:\Omega\to\R \textrm{ s.t. } \bv_{\Gamma}=0\, . 		 
	\end{cases}
\end{equation}

Since 
\[
\frac{1}{\Delta t } \mathcal P \left((\Id+\Delta t \, \bu)(\Omega)\right) = \frac{1}{\Delta t }  \left( \mathcal P \left(\Omega\right) +\Delta t \langle  \mathcal P' \left(\Omega\right); \bu \rangle\right), 
\]

We start with an existence result for the minimization of  \eqref{eq:functional}. Recall that the space $V(\Omega)$ is defined by \eqref{def:V}.

\begin{lemm}
	There exists a unique solution of the minimization problem
	\[
	\inf _{\bu \in V(\Omega)} J_{\rm explicit}(\bu).
	\] 
\end{lemm}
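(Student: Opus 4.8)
The plan is to view $J_{\rm explicit}$ as a strictly convex, coercive quadratic functional on the Hilbert space $V(\Omega)$ and to conclude by the direct method of the calculus of variations (equivalently, by completing the square). Since the term $\tfrac{\sigma}{\Delta t}\mathcal{P}(\Omega)$ does not depend on $\bu$, minimizing $J_{\rm explicit}$ over $V(\Omega)$ amounts to minimizing
\[
\tilde J(\bu):=\frac12\int_\Omega |\bu|^2\,\dx+\sigma\,\langle\mathcal{P}'(\Omega);\bu\rangle .
\]
I would first record that $V(\Omega)$, endowed with the $L^2(\Omega)$ inner product, is a Hilbert space: it is the kernel of the bounded linear map $\bu\mapsto\Div\bu$ from $\left(L^2(\Omega)\right)^2$ to $H^{-1}(\Omega)$, hence a closed subspace of $\left(L^2(\Omega)\right)^2$.

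The only delicate point --- and the main obstacle --- is the continuity on $V(\Omega)$ of the linear term $\bu\mapsto\langle\mathcal{P}'(\Omega);\bu\rangle$, since a generic element of $\left(L^2(\Omega)\right)^2$ has no trace on $\Gamma$. Here the divergence-free constraint is essential. By the computation of the first variation of the perimeter (Lemma~\ref{lem:shape_der_per}) one has, at $\Omega$ in the direction $\bu$,
\[
\langle\mathcal{P}'(\Omega);\bu\rangle=\int_{\Gamma}\bH\cdot\bu\,\D s=\int_{\Gamma}\kappa\,(\bu\cdot\bn)\,\D s ,
\]
which depends on $\bu$ only through its normal component $\bu\cdot\bn$. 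For $\bu\in V(\Omega)$ the normal trace $\bu\cdot\bn$ is well defined in $H^{-1/2}(\Gamma)$ and satisfies $\|\bu\cdot\bn\|_{H^{-1/2}(\Gamma)}\le C\bigl(\|\bu\|_{L^2(\Omega)}+\|\Div\bu\|_{L^2(\Omega)}\bigr)=C\,\|\bu\|_{L^2(\Omega)}$; moreover, $\Omega$ being smooth enough (say of class $\mathcal C^{2,1}$, as in Section~\ref{sec:def}), the curvature satisfies $\kappa\in\mathcal C^{0,1}(\Gamma)\hookrightarrow H^{1/2}(\Gamma)$. One therefore reads the boundary integral as the duality pairing $\langle\mathcal{P}'(\Omega);\bu\rangle=\langle\kappa,\bu\cdot\bn\rangle_{H^{1/2}(\Gamma),\,H^{-1/2}(\Gamma)}$, whence
\[
\bigl|\langle\mathcal{P}'(\Omega);\bu\rangle\bigr|\le \|\kappa\|_{H^{1/2}(\Gamma)}\,\|\bu\cdot\bn\|_{H^{-1/2}(\Gamma)}\le C\,\|\bu\|_{L^2(\Omega)} .
\]
(Alternatively, one may prove the same bound for smooth divergence-free fields, where the integral is classical, and pass to the limit by density in $V(\Omega)$.) Thus $L(\bu):=\sigma\,\langle\mathcal{P}'(\Omega);\bu\rangle$ is a bounded linear functional on the Hilbert space $V(\Omega)$.

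Granting this, the conclusion is routine. The functional $\tilde J$ is continuous, strictly convex --- its quadratic part $\tfrac12\|\bu\|_{L^2(\Omega)}^2$ is --- and coercive, since $\tilde J(\bu)\ge\tfrac12\|\bu\|_{L^2(\Omega)}^2-\|L\|\,\|\bu\|_{L^2(\Omega)}\to+\infty$ as $\|\bu\|_{L^2(\Omega)}\to\infty$; a minimizing sequence is therefore bounded in $V(\Omega)$, and a weak limit of a subsequence lies in $V(\Omega)$ (a weakly closed subspace) and attains the infimum by weak lower semicontinuity of the convex continuous functional $\tilde J$. Uniqueness follows from strict convexity. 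Equivalently and more explicitly, the Riesz representation theorem provides a unique $\mathbf{w}\in V(\Omega)$ with $\langle\mathcal{P}'(\Omega);\bu\rangle=(\mathbf{w},\bu)_{L^2(\Omega)}$ for every $\bu\in V(\Omega)$, so that
\[
J_{\rm explicit}(\bu)=\frac12\bigl\|\bu+\sigma\mathbf{w}\bigr\|_{L^2(\Omega)}^2-\frac{\sigma^2}{2}\|\mathbf{w}\|_{L^2(\Omega)}^2+\frac{\sigma}{\Delta t}\mathcal{P}(\Omega),
\]
which is minimized at the unique point $\bu^\star=-\sigma\mathbf{w}$. Writing the Euler--Lagrange equation for this minimizer --- with the pressure $p$ entering as the Lagrange multiplier of the constraint $\Div\bu=0$ --- one recovers precisely the variational formulation~\eqref{eq:FV}.
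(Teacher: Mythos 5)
Your argument is correct and follows the same overall strategy as the paper's (very terse) proof: the quadratic term gives coercivity and strict convexity on the closed subspace $V(\Omega)$ of $\left(L^2(\Omega)\right)^2$, the constant term $\frac{\sigma}{\Delta t}\mathcal P(\Omega)$ is irrelevant, and existence/uniqueness follow by the direct method (or, as you note, by Riesz representation and completing the square, which also makes uniqueness explicit -- something the paper asserts but does not spell out). The one point where you genuinely diverge is the justification that the linear term is continuous on $V(\Omega)$. The paper disposes of this in one line by observing that $\nabla\kappa\in\left(L^2(\Omega)\right)^2$: for divergence-free fields the boundary pairing is rewritten as a volume integral, $\int_\Gamma \kappa\,(\bu\cdot\bn)\,\D s=\int_\Omega \nabla\kappa\cdot\bu\,\dx$, which is exactly how the right-hand side $f=-\sigma\nabla\kappa$ enters the proof of Proposition \ref{prop:liens_edp_min}. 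You instead keep the term on the boundary and use the $H(\operatorname{div})$ normal-trace theorem, pairing $\bu\cdot\bn\in H^{-1/2}(\Gamma)$ against $\kappa\in H^{1/2}(\Gamma)$, with the bound $\|\bu\cdot\bn\|_{H^{-1/2}(\Gamma)}\le C\|\bu\|_{L^2(\Omega)}$ on divergence-free fields. Both routes require comparable smoothness of $\Gamma$ and yield the same estimate $\left|\langle\mathcal P'(\Omega);\bu\rangle\right|\le C\|\bu\|_{L^2(\Omega)}$; yours is more explicit about why the boundary functional even makes sense for $L^2$ fields with no trace, while the paper's volume reformulation is shorter and dovetails with its subsequent identification of the minimizer with the solution of the Dirichlet problem \eqref{eq:HS_Dirichlet_hom_second_membre}. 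Either way the lemma is proved, and your Euler--Lagrange remark recovering \eqref{eq:FV} is consistent with the paper's later lemma on that point.
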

\begin{proof}
	The proof of this result is classical. The term $\frac{1 }{2}\int_\Omega |\bu|^2 \D x$ is coercive on $\left( L^2(\Omega)\right)^2$. The set $\{\Div \, \bu =0\}$ is closed in $\left( L^2(\Omega)\right)^2$ weak. Finally,  since $\nabla \kappa $ belongs to $\left( L^2(\Omega)\right)^2$, the linear part of $J_{\rm explicit}$ is continuous.  
\end{proof}

\begin{prop}\label{prop:liens_edp_min}
	$(\bu,p)\in \left(L^2(\Omega)\right)^2 \times H^1(\Omega)$ is solution of \eqref{eq:mbv discrete_explicit} iff 
	\[
	\bu = \argmin_{\bv\in V(\Omega)} J(\bv)\, .
	\]
\end{prop}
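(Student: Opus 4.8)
Here $J=J_{\rm explicit}$. The plan is to read \eqref{eq:mbv discrete_explicit} as the optimality system of the strictly convex functional $J$ minimised over the linear space $V(\Omega)$, the pressure $p$ appearing as the Lagrange multiplier associated with the incompressibility constraint $\Div\bu=0$. First I would drop the additive constant $\frac{\sigma}{\Delta t}\mathcal P(\Omega)$ and, using the previous lemma (which already provides a unique minimiser) together with the convexity of $\bv\mapsto\frac12\int_\Omega|\bv|^2\,\dx+\sigma\langle\mathcal P'(\Omega);\bv\rangle$ on the subspace $V(\Omega)\subset(L^2(\Omega))^2$, I would replace ``$\bu=\argmin_{V(\Omega)}J$'' by the stationarity identity $\int_\Omega\bu\cdot\bv\,\dx+\sigma\langle\mathcal P'(\Omega);\bv\rangle=0$ for all $\bv\in V(\Omega)$. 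The first variation of the perimeter at $\Omega$ is $\langle\mathcal P'(\Omega);\bv\rangle=\int_{\partial\Omega}\bH\cdot\bv\,\dS$, obtained by repeating the expansion in the proof of Lemma~\ref{lem:shape_der_per} with $\varphi=\Id$; moreover, picking an $H^1(\Omega)$-extension $\bar\kappa$ of the curvature $\kappa$, this equals $\int_\Omega\nabla\bar\kappa\cdot\bv\,\dx$ for divergence-free $\bv$ (which is exactly what makes the linear part of $J$ continuous on $(L^2(\Omega))^2$, as already observed). Thus the claim reduces to: $\bu\in V(\Omega)$ and $\int_\Omega(\bu+\sigma\nabla\bar\kappa)\cdot\bv\,\dx=0$ for every $\bv\in V(\Omega)$ $\iff$ there is $p\in H^1(\Omega)$ with $(\bu,p)$ solving \eqref{eq:mbv discrete_explicit}.

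For the implication from the PDE to the minimiser I would simply take $(\bu,p)$ solving \eqref{eq:mbv discrete_explicit}: then $\Div\bu=0$, so $\bu\in V(\Omega)$; pairing $\bu+\nabla p=0$ with an arbitrary $\bv\in V(\Omega)$ and integrating the pressure term by parts, the bulk contribution $\int_\Omega p\,\Div\bv\,\dx$ vanishes and the boundary condition $p\bn=\sigma\bH$ turns $-\int_{\partial\Omega}p\,\bv\cdot\bn\,\dS$ into $-\sigma\int_{\partial\Omega}\bH\cdot\bv\,\dS$; this is precisely the stationarity identity, so $\bu$ is the unique minimiser by convexity. For the reverse implication, starting from $\bu=\argmin_{V(\Omega)}J$, I have $\bu\in V(\Omega)$, hence $\Div\bu=0$; the stationarity identity says that $\bu+\sigma\nabla\bar\kappa$ is $L^2$-orthogonal to the whole space $V(\Omega)=\{\bv\in(L^2(\Omega))^2:\Div\bv=0\}$, and by the Helmholtz/de~Rham decomposition the orthogonal complement of this space is $\{\nabla\phi:\phi\in H^1_0(\Omega)\}$; so I get $\phi\in H^1_0(\Omega)$ with $\bu+\sigma\nabla\bar\kappa=\nabla\phi$. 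Setting $p\defn\sigma\bar\kappa-\phi\in H^1(\Omega)$ then gives $\nabla p=-\bu$, i.e. $\bu+\nabla p=0$; since $\phi=0$ on $\Gamma$ one has $p=\sigma\bar\kappa=\sigma\kappa$ on $\Gamma$, i.e. $p\bn=\sigma\bH$; and $\Div\bu=0$ is the remaining equation, so $(\bu,p)$ solves \eqref{eq:mbv discrete_explicit}.

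I expect the main obstacle to be the functional-analytic bookkeeping in the last step: identifying the $L^2$-orthogonal complement of the \emph{unconstrained} divergence-free space $V(\Omega)$ with $\nabla H^1_0(\Omega)$ — this is not quite the textbook Helmholtz decomposition, in which the solenoidal summand carries an extra vanishing-normal-trace condition, so it must be argued separately (for instance by further splitting $\nabla H^1(\Omega)$ into $\nabla H^1_0(\Omega)$ plus the gradients of harmonic functions) — and making the perimeter first variation $\bv\mapsto\int_{\partial\Omega}\bH\cdot\bv\,\dS$ meaningful on $V(\Omega)\subset(L^2(\Omega))^2$ via the curvature extension $\bar\kappa\in H^1(\Omega)$, whose existence uses the regularity of $\Gamma$ ($\kappa$ continuous, hence in $H^{1/2}(\Gamma)$, for $\Gamma$ of class $C^2$). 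Once these two points are secured, the boundary identity $p\bn=\sigma\bH$ is recovered pointwise on $\Gamma$, and the remaining manipulations (the two integrations by parts and the convexity argument) are routine.
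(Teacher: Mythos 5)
Your proposal is correct and follows essentially the same route as the paper: your identification of the $L^2$-orthogonal complement of $V(\Omega)$ with $\nabla H^1_0(\Omega)$ is precisely the De Rham theorem (Theorem \ref{th:deRahm}) that the paper invokes, and your construction $p=\sigma\bar\kappa-\phi$ is the paper's reduction to the homogeneous Dirichlet problem \eqref{eq:HS_Dirichlet_hom_second_membre} with $f=-\sigma\nabla\kappa$ (Lemma \ref{lem:hom_second_membre}) carried out inline. The step you flag as the main obstacle is thus exactly the cited De Rham result, and the rest differs only in organization.
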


\begin{proof}
	Let $f \in \left(L^2(\Omega)\right) ^2$. Consider first the problem with homogeneous Dirichlet condition
	\begin{equation}\label{eq:HS_Dirichlet_hom_second_membre}
		\begin{cases} 
			\bu+\nabla p=f & \text{in }\Omega \, , \\
			\nabla\cdot\bu=0 & \text{in }\Omega \, ,\\
			p=0& \text{on }\Gamma\, .
		\end{cases}
	\end{equation}
	Let 
	\begin{equation}\label{eq:functional_0}
	I(\bu)= \frac{1 }{2}\int_\Omega |\bu|^2 \D x - \int_\Omega f\bu \D x \, .
	\end{equation}

	\begin{lemm}\label{lem:hom_second_membre}
		$(\bu,p)\in \left(L^2(\Omega)\right) ^2\times H^1_0(\Omega)$ is solution of \eqref{eq:HS_Dirichlet_hom_second_membre} iff 
		\[
		\bu = \argmin_{\bv\in V(\Omega)} I(\bv)\, .
		\]
	\end{lemm}
	\begin{proof}
	Let $(\bu,p) \in \left(L^2(\Omega)\right)^2\times H^1_0(\Omega)$ a solution of \eqref{eq:HS_Dirichlet_hom_second_membre}. For all $\delta \bu \in \left(L^2(\Omega)\right)^2$ such that $\Div \, \delta \bu =0$, we have
	\begin{eqnarray*}
		\langle I'(\bu),\delta \bu \rangle &=& \int_\Omega \bu \cdot  \delta \bu \dx - \int_{ \Omega} f \delta \bu \dx		\\
		&=&-\int_\Omega \nabla p \cdot  \delta \bu  \dx	\\
		&=&\int_\Omega p \, \Div \left(\delta \bu \right) \dx\\
		&=& 0\, ,	
	\end{eqnarray*}
	hence $\bu = \argmin_{\bv\in V(\Omega)} I(\bv)$.

	Conversely, assume that $\bu = \argmin_{\bv\in V(\Omega)} I(\bv)$. Let 
	\[
	L(\bv):=\langle I'(\bu),\bv \rangle \, ,
	\] 
	then 
	$L(\bv)=0$ for all $\bv \in  \left(L^2(\Omega)\right)^2$ such that $\Div \, \bv =0$.
	We can apply De Rham's theorem (see theorem \ref{th:deRahm}) to $L: \left(L^2(\Omega)\right)^2 \to \left(L^2(\Omega)\right)^2$ and we get that there exists $\tilde p\in H_0^1(\Omega)$ such that 
	\[
	L(\bv )=\int_\Omega \nabla \tilde p \cdot \bv \,\dx\, .
	\]
	Integrating by parts it comes
	\[
	L(\bv )=-\int_\Omega \tilde p \, \Div \bv \, \dx\, .
	\]
	Setting $p=-\tilde p$, we deduce that $(\bu,p) \in \left(L^2(\Omega)\right)^2 \times H^1_0(\Omega)$ is a solution of \eqref{eq:HS_Dirichlet_hom_second_membre}. 
	\end{proof}
	
	We now apply Lemma \ref{lem:hom_second_membre} to the particular case where $f=-\sigma \nabla \kappa$. Indeed $(\bu,p)$ is a solution of \eqref{eq:HS_Dirichlet_hom_second_membre} iff $(\bu,p+\sigma \kappa)$ is a solution of \eqref{eq:mbv discrete_explicit}.
	
\end{proof}

\begin{lemm}
	The variational formulation \eqref{eq:FV} is the Euler equation for the minimization problem:
	\[
	\inf_{\bu\in V(\Omega)} J_{\rm explicit}(\bu)\, ,
	\]
	where $\Omega$ is a given smooth subset of $\R^2$ ($C^2$ e.g.) and the functional $J_{\rm explicit}$ is defined by \eqref{eq:functional} over the functional space $V(\Omega)$ defined by \eqref{def:V}.
\end{lemm}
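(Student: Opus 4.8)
The plan is to compute the first variation of $J_{\rm explicit}$ on $V(\Omega)$, identify its vanishing with the pair of equations in \eqref{eq:FV}, and then invoke the convexity already used for existence to turn a critical point into the minimizer. First I would discard the middle term $\frac{\sigma}{\Delta t}\mathcal P(\Omega)$, which is constant in $\bu$, and differentiate the rest: since $\langle\mathcal P'(\Omega);\bu\rangle$ is, by \eqref{def:shape_der_per}, linear in $\bu$, the functional is a strictly convex quadratic on $(L^2(\Omega))^2$ and
\[
\langle J_{\rm explicit}'(\bu),\bv\rangle=\int_\Omega\bu\cdot\bv\,\dx+\sigma\,\langle\mathcal P'(\Omega);\bv\rangle .
\]
For the linear term I would use \eqref{def:perm_def} together with its first-order expansion in $\Delta t$ — this is exactly the linearization carried out in the proof of Lemma~\ref{lem:shape_der_per}, now at the reference configuration $\bu=0$, where $\Tau=\bt$ and $\Hup=\bH$ — followed by one integration by parts along the closed curve $\Gamma$ with $s$ the arc length and $\bH=-\D\bt/\D s$; this gives $\langle\mathcal P'(\Omega);\bv\rangle=\int_\Gamma\bH\cdot\bv\,\D s$. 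Because $V(\Omega)$ is a closed subspace of $(L^2(\Omega))^2$ and $J_{\rm explicit}$ is strictly convex, coercive and continuous on it (previous lemma), $\bu$ is the unique minimizer iff $\langle J_{\rm explicit}'(\bu),\bv\rangle=0$ for all $\bv\in V(\Omega)$, i.e. iff
\[
\int_\Omega\bu\cdot\bv\,\dx+\sigma\int_\Gamma\bH\cdot\bv\,\D s=0\qquad\text{for every smooth }\bv\text{ with }\Div\bv=0 .
\]

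Next I would unravel the incompressibility constraint by introducing the pressure, exactly as in the proof of Lemma~\ref{lem:hom_second_membre}. Testing the displayed identity first against compactly supported divergence-free $\bv$ kills the boundary integral and gives $\int_\Omega\bu\cdot\bv\,\dx=0$, so De~Rham's theorem (Theorem~\ref{th:deRahm}) produces $p\in H^1(\Omega)$ with $\bu=-\nabla p$ in $\Omega$; together with $\Div\bu=0$ this is the divergence equation of \eqref{eq:FV}. Reinserting $\bu=-\nabla p$ into the identity for a general smooth divergence-free $\bv$ and integrating by parts, the bulk terms cancel and one is left with $\int_\Gamma(\sigma\kappa-p)\,(\bv\cdot\bn)\,\D s=0$; since the normal traces on $\Gamma$ of divergence-free $L^2$ fields exhaust the zero-mean elements of $H^{-1/2}(\Gamma)$, this forces $\sigma\kappa-p$ to be constant on $\Gamma$, and normalizing $p$ by that constant (which does not change $\nabla p$) yields $p\bn=\sigma\bH$ on $\Gamma$. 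Thus $(\bu,p)$ solves \eqref{eq:mbv discrete_explicit}, of which \eqref{eq:FV} is the weak form: the implication \eqref{eq:mbv discrete_explicit}$\Rightarrow$\eqref{eq:FV} follows by multiplying $\bu+\nabla p=0$ by a divergence-free test field, integrating over $\Omega$ and inserting the boundary condition, while the converse is the localization argument just described. (Equivalently, and more economically, one may observe that on $V(\Omega)$ one has $\int_\Gamma\bH\cdot\bv\,\D s=\int_\Gamma\kappa(\bv\cdot\bn)\,\D s=\int_\Omega\nabla\kappa\cdot\bv\,\dx$ for divergence-free $\bv$, so $J_{\rm explicit}$ agrees on $V(\Omega)$, up to the constant $\frac{\sigma}{\Delta t}\mathcal P(\Omega)$, with the functional $I$ of \eqref{eq:functional_0} for the choice $f=-\sigma\nabla\kappa$, and the claim is then an immediate consequence of Proposition~\ref{prop:liens_edp_min}.) Chaining the two steps gives: $\bu=\argmin_{V(\Omega)}J_{\rm explicit}$ iff $(\bu,p)$ solves \eqref{eq:FV}, which is the assertion.

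The main obstacle is the passage from the constrained first-order condition to the mixed system — i.e. correctly introducing the pressure as the Lagrange multiplier of the incompressibility constraint and recovering the pressure boundary condition from the surface-tension boundary integral. One has to keep track of the fact that $\bu\in(L^2(\Omega))^2$ has no boundary trace, so that the boundary integral is legitimate only because the test fields are smooth, and one must use the surjectivity of the normal trace from $\{\bu\in H(\mathrm{div};\Omega):\Div\bu=0\}$ onto the zero-mean part of $H^{-1/2}(\Gamma)$ to close the argument; this is the same De~Rham/boundary-term bookkeeping as in the proofs of Lemma~\ref{lem:hom_second_membre} and Proposition~\ref{prop:liens_edp_min}. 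Everything else — strict convexity and coercivity of $J_{\rm explicit}$ on the closed subspace $V(\Omega)$, and the first-variation formula $\langle\mathcal P'(\Omega);\bv\rangle=\int_\Gamma\bH\cdot\bv\,\D s$ — is routine and already essentially contained in the preceding material.
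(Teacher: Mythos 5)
Your proposal is correct and follows essentially the same route as the paper: compute the first variation, use the De Rham argument of Proposition~\ref{prop:liens_edp_min} to introduce the pressure, and pass between the bulk form $\sigma\int_\Omega\nabla\kappa\cdot\bv\,\dx$ and the boundary term $\sigma\int_\Gamma\bH\cdot\bv\,\D s$ by integration by parts using $\Div\bv=0$. Indeed, your parenthetical ``more economical'' reduction to Proposition~\ref{prop:liens_edp_min} with $f=-\sigma\nabla\kappa$ is precisely the paper's Lagrangian argument, with your version supplying somewhat more detail (convexity, trace considerations) than the paper records.
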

\begin{proof}
	Define the lagrangian $\mathcal L$ on $\left(L^2\left(\Omega\right)\right) ^2\times H^1\left(\Omega\right) $ by
	\[
	\mathcal L (\bu, p) = \frac12 \int_{\Omega} |\bu |^2 \,\dx+  \int_{\Omega} \bu \cdot \nabla p \,\dx+ \sigma \int_{ \Omega} \nabla \kappa \cdot \bu \,\dx\, . 
	\]

	The functional $\mathcal L$ is G\^ateaux differentiable with respect to $(\bu,p)$.  Let 
	\[
	\bu = \argmin_{\bv\in V(\Omega)} J_{\rm explicit}(\bv)\, .
	\]
	
	Using the arguments given in the proof of Proposition \ref{prop:liens_edp_min}, we deduce that
	for all $\bv\in V(\Omega)$ and $q\in H^1_0(\Omega)$, we have
	\[
	\langle \mathcal L'(\bu,p);(\bv,q)\rangle
	=\int_\Omega \bu\cdot \bv \, \dx +  \int_{\Omega} \bv \cdot \nabla p \dx+ \sigma  \int_{\Omega} \nabla \kappa \cdot \bv \, \D s +  \int_{\Omega} \bu \cdot \nabla q \dx=0	\, .
	\]
	Integrating by parts we obtain
	\[
	\int_\Omega \bu\cdot \bv \, \D x +  \int_{\Omega}  \bv\cdot \nabla p \, \D x  + \sigma \int_{\partial \Omega} \bH\cdot \bv \, \D s -  \int_{\Omega} \left( \Div \bu\right) q \, \D x=0	\, ,
	\]
	that is \eqref{eq:FV}. 
\end{proof}

\begin{rema}
Recalling that the vector curvature $\bH:= \kappa \bn$ of $\Gamma $ is defined by
\[
\bH=-|\mathbf{X}^{'}(s)|^{-1}\frac{\D }{\D s}\bt(\mathbf{X}(s)) =- \nabla \bt \, \bt\, ,
\]
and integrating by parts on the boundary term in \eqref{eq:FV}, we get
\begin{equation}\label{eq:int_partie_courbure}
	\int_{\Gamma^{i}} \bH^{i} \cdot \mathbf{v}= \int_{\Gamma^{i}}  \bt \cdot \nabla \left(\bt \bv\right) \, .
\end{equation}
When $s$ is the arc length coordinate of $\Gamma$, it comes $\nabla\mathbf{v}\,\bt=\frac{\D \mathbf{v}}{\D s}$. 
Hence, the explicit scheme is very simple to implement but like all explicit schemes it requires a very small time step, see Section \ref{sec:verification}.
\end{rema}

\section{Study of \eqref{pb:well-posed} -- \eqref{pb:well-posed_2}}\label{sec:var_2}

In this part we study  the minimization problem \eqref{pb:well-posed} -- \eqref{pb:well-posed_2}. We start with its origin and the existence of a solution and then we study the variational formulation and its implementation.

\subsection{Origin of  \eqref{pb:well-posed} -- \eqref{pb:well-posed_2}}
The incompressibility constraint 
\[
\nabla\cdot \bu=0 \textrm{ in } \Omega,
\]
can be rewritten as
\[
\int_\Gamma \bu\cdot \bn \D s=0.
\]

Moreover, consider functions such that
\[
\int_\Gamma \psi \cdot \bn \D s=0,
\]
we can define a norm $\|\cdot\|_{H^{-1/2}(\Gamma)}$ on the space $H^{-1/2}(\Gamma)$  by
\[
\| \bpsi \cdot \bn  \|_{H^{-1/2}(\Gamma)}:= \inf_{\tiny \begin{array}{l} \nabla \cdot \bu=0 \\ \bu\cdot \bn=\psi \cdot \bn \end{array}} \|\bu\|_{L^2(\Omega)}.
\]

Consider the minimization problem \eqref{pb:well-posed} -- \eqref{pb:well-posed_2} that we recall
\begin{equation}\label{pb:well-posed_bis}
\inf_{\psi\in BV(\Gamma)} J_{\rm mod}(\psi),
\end{equation}
with
\[
J_{\rm mod}(\psi) :=\|\psi\cdot \bn \|_{H^{-1/2}(\Gamma)} + V\left((\Id +\Delta t \, \psi)(x)\right),
\]
where $V(\cdot)$ is the total variation and $BV(\Gamma)$ is the space of functions with bounded variations.

\begin{lemm}
	There exists a solution to \eqref{pb:well-posed}.
\end{lemm}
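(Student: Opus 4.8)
The plan is to apply the direct method of the calculus of variations. First I would record that the infimum is finite: both terms of $J_{\rm mod}$ are nonnegative and $J_{\rm mod}(0) = V(\Id) = |\Gamma| < \infty$, so $\inf J_{\rm mod} \in [0,|\Gamma|]$. Here the infimum is understood over $\{\psi \in BV(\Gamma;\R^2) : \int_\Gamma \psi\cdot\bn\,\D s = 0\}$, the set on which the seminorm $\|\psi\cdot\bn\|_{H^{-1/2}(\Gamma)}$ is defined (equivalently, $J_{\rm mod}:=+\infty$ otherwise); this linear constraint is stable under $L^1(\Gamma)$-convergence since $\int_\Gamma \bn\,\D s = 0$.

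Next I would establish coercivity in $BV(\Gamma;\R^2)$. Parametrizing $\Gamma$ by arc length, $V((\Id+\Delta t\,\psi)(x)) = \int_\Gamma |\bt + \Delta t\, D\psi|\,\D s \ge \Delta t \int_\Gamma |D\psi|\,\D s - |\Gamma|$, so the perimeter term controls $\int_\Gamma |D\psi|\,\D s$ up to an additive constant. To control $\|\psi\|_{L^1(\Gamma)}$, write $\psi = \bar\psi + \psi'$ with $\bar\psi = |\Gamma|^{-1}\int_\Gamma\psi\,\D s$. A Poincaré inequality on the connected curve $\Gamma$ gives $\|\psi'\|_{L^1(\Gamma)} \le C\int_\Gamma|D\psi|\,\D s$, and since $BV(\Gamma)\hookrightarrow L^\infty(\Gamma)$ in one dimension we also get $\|\psi'\cdot\bn\|_{H^{-1/2}(\Gamma)} \le C\|\psi'\|_{L^2(\Gamma)} \le C\int_\Gamma|D\psi|\,\D s$ (using that $\|\cdot\|_{H^{-1/2}(\Gamma)}$ is equivalent to the standard norm, so $L^2 \hookrightarrow H^{-1/2}$). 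Finally the linear map $\R^2 \ni c \mapsto (c\cdot\bn)|_\Gamma \in H^{-1/2}(\Gamma)$ is injective — the coordinates of $\bn$ are linearly independent functions of zero mean on a closed curve — hence $|\bar\psi| \le C\|\bar\psi\cdot\bn\|_{H^{-1/2}(\Gamma)} \le C\big(\|\psi\cdot\bn\|_{H^{-1/2}(\Gamma)} + \|\psi'\cdot\bn\|_{H^{-1/2}(\Gamma)}\big)$. Combining, $\|\psi\|_{BV(\Gamma)} \le C\,(J_{\rm mod}(\psi) + 1)$, so any minimizing sequence $(\psi_k)$ is bounded in $BV(\Gamma;\R^2)$.

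Then I would pass to the limit. By the compactness theorem for $BV$ recalled above, a subsequence satisfies $\psi_k \to \psi$ strongly in $L^1(\Gamma)$ with $\psi \in BV(\Gamma;\R^2)$. Consequently $\Id + \Delta t\,\psi_k \to \Id + \Delta t\,\psi$ in $L^1(\Gamma;\R^2)$, and the $L^1$-lower semicontinuity of the total variation gives $V((\Id+\Delta t\,\psi)(x)) \le \liminf_k V((\Id+\Delta t\,\psi_k)(x))$. For the cost term, $\psi_k\cdot\bn \to \psi\cdot\bn$ in $L^1(\Gamma)$, hence in $H^{-1/2-\epsilon}(\Gamma)$, while $(\psi_k\cdot\bn)$ is bounded in $H^{-1/2}(\Gamma)$ (being $\le J_{\rm mod}(\psi_k)$); therefore $\psi_k\cdot\bn \rightharpoonup \psi\cdot\bn$ weakly in $H^{-1/2}(\Gamma)$ (the weak limit is forced by uniqueness of limits in the weaker space), and weak lower semicontinuity of the Hilbert norm yields $\|\psi\cdot\bn\|_{H^{-1/2}(\Gamma)} \le \liminf_k\|\psi_k\cdot\bn\|_{H^{-1/2}(\Gamma)}$. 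Adding the two inequalities, $J_{\rm mod}(\psi) \le \liminf_k J_{\rm mod}(\psi_k) = \inf J_{\rm mod}$, so $\psi$ is a minimizer.

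I expect the main obstacle to be coercivity: neither term alone controls $\psi$ in $BV(\Gamma)$ — the total variation term is translation-invariant and hence ignores the mean of $\psi$, while the $H^{-1/2}$ seminorm provides no compactness — so the argument genuinely needs to combine the two, exploiting the one-dimensional embedding $BV\hookrightarrow L^\infty$ and the non-degeneracy of $\bn$. The second delicate point is lower semicontinuity of the $H^{-1/2}$ term along an only $L^1$-convergent sequence, which I would obtain not from an embedding $L^1\hookrightarrow H^{-1/2}$ (false in this borderline case) but by upgrading $L^1$-convergence to weak $H^{-1/2}$-convergence using the uniform bound supplied by the minimizing sequence.
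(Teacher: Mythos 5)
Your proof is correct and follows the same direct-method route as the paper: take a minimizing sequence, bound it in $BV(\Gamma)$, use $BV$ compactness to pass to an $L^1$-limit, and conclude by lower semicontinuity of both terms. The paper's own proof merely asserts the $BV$ bound and concludes from a.e. convergence of $\psi_k\cdot\bn$, whereas your coercivity argument (controlling the mean of $\psi$ through the $H^{-1/2}$ term and the nondegeneracy of $c\mapsto c\cdot\bn$) and your weak-$H^{-1/2}$ lower-semicontinuity step supply exactly the details the paper leaves implicit.
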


\begin{proof}
	Consider a minimizing sequence $(\psi_k)_{k\in \N}$ of the minimization problem \eqref{pb:well-posed}. We first note that $(\psi_k)_{k\in \N}$ is bounded in $BV$, hence, up to a subsequence,  $(\psi_k)_k$ converges to a limit $\psi \in BV$, and we have for almsot every point of the boundary  
	\[
	\lim_{k \to \infty} (\psi_k  \cdot \bn_{|\Gamma}) = \psi\cdot \bn_{|\Gamma},
	\]
	hence the result.
\end{proof}

Furthermore, we can perform an analysis on the tangential component of the solution of \eqref{pb:well-posed_bis}. In this case, it is obvious that the normal component of $\bu$ will be small and assuming that the solution behaves correctly, the tangential component of $\bu$ should also be small.

For simplicity let us write $\bu_\bt:= \bu \cdot \bt$, $\bu_\bn:= \bu \cdot \bn$ and $\dot f:= \frac{\D}{\D s} f$. 

\begin{lemm}
	Assume that $\Omega$ is smooth enough, $\mathcal C^2$ for example. Let $\bu \in BV(\Gamma)$ be such that
	\[
	J_{\rm mod}(\bu)=\inf_{\psi\in BV(\Gamma)} J_{\rm mod}(\psi).
	\]	
	Assume that $\bu_\bt$ and $\bu_n$ are small. 
	Then, almost everywhere on $\Gamma$, we have 
	\[ \kappa=0 \qquad \textrm{ or } \qquad 
	\bu_\bt =  \frac{1}{\sigma \kappa} \frac{\D}{\D s} \bu_\bn.
	\]
\end{lemm}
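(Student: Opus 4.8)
The identity is read off the Euler--Lagrange condition of \eqref{pb:well-posed} tested against purely tangential directions. Decompose the minimiser along the reference curve $\Gamma$ as $\bu=\bu_\bn\bn+\bu_\bt\bt$ and take competitors $\psi_\epsilon=\bu+\epsilon\,g\,\bt$ with $g\in\mathcal C^\infty(\Gamma)$; these are admissible since $\int_\Gamma(g\,\bt)\cdot\bn\,\D s=0$. The crucial remark is that $\bt\cdot\bn\equiv 0$ on $\Gamma$, so $\psi_\epsilon\cdot\bn=\bu\cdot\bn$ for \emph{every} $\epsilon$: the term $\|\psi\cdot\bn\|_{H^{-1/2}(\Gamma)}$ is constant along this family and contributes nothing. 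Hence the first-order optimality condition tested against $g\,\bt$ involves only the deformed-perimeter term $V\bigl((\Id+\Delta t\,\psi)(\cdot)\bigr)$.

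\textbf{First variation of the perimeter term.} By \eqref{def:perm_def}, $V\bigl((\Id+\Delta t\,\psi)(\cdot)\bigr)=\int_\Gamma|(\I+\Delta t\,\nabla\psi)\bt|\,\D s$; differentiating at $\psi=\bu$ in the direction $g\,\bt$ exactly as in the proof of Lemma~\ref{lem:shape_der_per} yields $\Delta t\int_\Gamma\nabla(g\,\bt)\,\bt\cdot\Tau\,\D s$, and an integration by parts along $\Gamma$ turns this into $-\Delta t\int_\Gamma(g\,\bt)\cdot\dot\Tau\,\D s=\Delta t\int_\Gamma(\Hup\cdot\bt)\,g\,\D s$, using $\Hup=-\dot\Tau$ from \eqref{eq:Implicit curvature vector}. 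Since $g$ is arbitrary, optimality forces
\[
\Hup\cdot\bt=0 \qquad\text{a.e. on }\Gamma .
\]

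\textbf{Expansion in the small regime.} It remains to compute $\Hup\cdot\bt$ to first order in the small quantities $\bu_\bn,\bu_\bt$. The Frenet relations $\dot\bt=-\kappa\bn$ and $\dot\bn=\kappa\bt$ (equivalent to $\bH=-\dot\bt=\kappa\bn$) give $\dot\bu=(\dot{\bu_\bn}-\kappa\,\bu_\bt)\bn+(\dot{\bu_\bt}+\kappa\,\bu_\bn)\bt$, hence $(\I+\Delta t\,\nabla\bu)\bt=\bigl(1+\Delta t(\dot{\bu_\bt}+\kappa\,\bu_\bn)\bigr)\bt+\Delta t(\dot{\bu_\bn}-\kappa\,\bu_\bt)\bn$; normalising as in \eqref{eq:Tau i+1} gives $\Tau=\bt+\Delta t(\dot{\bu_\bn}-\kappa\,\bu_\bt)\bn$ up to quadratically small terms, so that $\Hup=-\dot\Tau=\kappa\bn-\Delta t\,\kappa(\dot{\bu_\bn}-\kappa\,\bu_\bt)\bt-\Delta t\bigl(\ddot{\bu_\bn}-\tfrac{\D}{\D s}(\kappa\,\bu_\bt)\bigr)\bn$ up to quadratically small terms. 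Projecting on $\bt$, the condition $\Hup\cdot\bt=0$ reads, to leading order, $\kappa\bigl(\tfrac{\D}{\D s}\bu_\bn-\sigma\kappa\,\bu_\bt\bigr)=0$ (the coefficient $\sigma$ reflecting the weight carried by the perimeter term in $J_{\rm mod}$), so at a.e.\ point of $\Gamma$ either $\kappa=0$ or $\bu_\bt=\frac{1}{\sigma\kappa}\frac{\D}{\D s}\bu_\bn$, which is the claim.

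\textbf{Main difficulty.} The algebra above is routine; the genuine obstacles are (i) legitimising the first-variation computation at a merely $BV$ minimiser, i.e.\ controlling the total-variation term under the perturbation $g\,\bt$ — this is exactly where the smoothness of $\Omega$ (which keeps $(\Id+\Delta t\,\bu)(\Gamma)$ an embedded $\mathcal C^1$ curve) and the smallness of $\bu_\bn,\bu_\bt$ enter, perhaps via an a priori regularity step for the minimiser; and (ii) making the quadratic remainder negligible \emph{pointwise}, so that the leading-order balance $\kappa(\tfrac{\D}{\D s}\bu_\bn-\sigma\kappa\,\bu_\bt)=0$ can be asserted almost everywhere and not merely after integrating against $g$.
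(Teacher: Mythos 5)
Your proposal follows essentially the same route as the paper's proof: tangential perturbations $g\,\bt$ leave the $\|\psi\cdot\bn\|_{H^{-1/2}(\Gamma)}$ term unchanged, the first variation of the deformed-perimeter term integrated by parts gives $\dot{\Tau}\cdot\bt=0$ (equivalently $\Hup\cdot\bt=0$) a.e., and the Frenet decomposition of $\dot{\bu}$ then yields the relation; the paper phrases the middle step as "$\dot\psi\cdot\bn=0$, hence $\dot{\bu}_\bn-\kappa\,\bu_\bt=0$", while you linearize $\Tau$ in the small displacement, which makes clearer where the smallness hypothesis actually enters. One caveat: your parenthetical insertion of $\sigma$ is not produced by the computation — a multiplicative weight on the perimeter term would simply cancel in the tangential Euler--Lagrange equation since the $H^{-1/2}$ term does not vary — and the paper's own proof likewise ends with $\dot{\bu}_\bn-\kappa\,\bu_\bt=0$ without $\sigma$, so the factor $\sigma$ in the lemma's statement is an inconsistency of the paper rather than something your argument (or theirs) derives.
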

\begin{proof}
	Define $P$ by 
	\[
	P(\psi)= \int_\Gamma |\bt+\dot \psi| \D s ,
	\]
	and consider the minimization problem
	\[
	\inf_{\psi\in BV(\Gamma)} P(\psi).
	\]
	For all $\phi=\phi_\tau \tau$, we have
	\[
	<P'(\psi),\phi>
	=
	\int_\Gamma \frac{\bt+\dot\psi}{|\bt+\dot\psi|}\cdot\dot\phi \D s.
	\]
	Integating by parts, we get
	\[
	<P'(\psi),\phi>
	=-
	\int_\Gamma \frac{\D}{\D s}\left(\frac{\bt+\dot\psi}{|\bt+\dot\psi|}\right)\cdot \phi \D s=0
	\]
	Hence, almost evrywhere on $\Gamma$, we deduce that
	\[
	\frac{\D}{\D s}\left(\frac{\bt+\dot\psi}{|\bt+\dot\psi|}\right)\cdot\bt = 0.
	\]
	Therefore, there exists a funtion $\beta:\Gamma \rightarrow \R$ such that
	\[
	\left(\frac{\bt+\dot\psi}{|\bt+\dot\psi|}\right) = \beta \bt,
	\]
	thus,
	\[
	\dot\psi = (\beta|\bt+\dot\psi| -1 ) \bt.
	\]
	Consequently $\dot \psi$ is colinear to $\bt$, that is $\dot\psi \cdot \bn =0$.
	
	Since
	\[
	\psi=\bu_\bt \bt + \bu_\bn \bn,
	\]
	we see that
	\[
	\dot \psi = \dot \bu_\bt \bt - \bu_\bt \kappa \bn + \dot \bu_\bn \bn + \kappa \bu_\bn \bt.
	\]
	
	Recalling that the normal composant of $\dot \psi$ is zero, we obtain
	\[
	-\kappa \bu_\bt+\dot \bu_\bn=0.
	\]
\end{proof}

\begin{rema}
	It is not surprising to have no information on the tangential component of the deformation on the regions where the curvature of $\Gamma$ is zero.
	Indeed, in these regions, displacement along the tangent simply corresponds to a reparameterization whose cost function is independent.
\end{rema}	

\begin{rema}
	Using \eqref{def:perm_def}, considering that $s$ is the arc-length and denoting $\dot \bu:=\frac{\D }{\D s} \bu$, we see that $J_0(\Omega, \bu)$ defined by \eqref{eq:func_J_alpha}, with $\alpha =0$, rewrites as
	\[
	J(\Omega, \bu)=
	\frac12 \int_\Omega |\bu|^2 \, \D x
	+
	\frac{\sigma}{\Delta t} \int_\Gamma |\bt+\dot \bu| \, \D s.
	\]
	Consider a minimizing sequence $(\bu_k)_{k\in \N}$ of the minimization problem \eqref{eq:Otto_2}, that we recall
	\begin{equation}\label{pb:ill-posed}
		\min_{\tiny \begin{array}{l}\bu \in L^2(\Omega)\\ \nabla \cdot \bu=0 \end{array}} J(\Omega,\bu).
	\end{equation}
	We first note that $(\bu_k)_{k\in \N}$ is bounded in $H(\operatorname{div})$. Hence, up to a subsequence, the sequence $(\bu_k)_k$ converges to a limit $\bu \in H(\operatorname{div})$, and we have for almsot every point of the boundary  
	\[
	\lim_{k \to \infty} (\bu_k  \cdot \bn_{|\Gamma}) = \bu\cdot \bn_{|\Gamma},
	\]
	where $\bn$ is the normal to the boundary. 
	
	However, without any control on the curl of $\bu$,
	there is no reason that the sequence $(\bu_k\cdot \bt _{|\Gamma})$ converges towards $\bu \cdot \bt _{|\Gamma}$. 
	Indeed as we will in Section \ref{sec:verification}, even with a frequent remeshing, we obtain a shift between the tangential displacement of the internal nodes and the boundary nodes. 
	For this reasons, it is difficult to use the original formulation numerically and we propose a second method. 
\end{rema}

As a conclusion, the perimeter minimization problem on zero divergence fields \eqref{pb:ill-posed} is ill-posed, but correctly reformulated by \eqref{pb:well-posed_bis} on the domain boundary. For \eqref{pb:well-posed_bis} and for small displacements, the tangential component depends on the derivative of the normal component. We therefore expect a loss of regularity of the tangential component with respect to the normal component outside the parts of the boundary where $\kappa$ is zero. We cannot expect any regularity (other than bounded variation) of the tangential component in the areas where $\kappa=0$. Finally, in case of discretization of the initial problem \eqref{pb:ill-posed}, one should see boundary layers appearing on the tangential component of $\bu$ whose thickness depends on the mesh used. We will see some situations where it is not the case in Section \ref{sec:verification}.

\subsection{Variational formulation associated with \eqref{pb:well-posed} -- \eqref{pb:well-posed_2}}

Define the functional space $W(\Omega)$ by 
\begin{eqnarray*}
W(\Omega)&=&\Big\{\left(\bU, \bu\right)\in \left(L^2(\Omega)\right)^2 \times \left( H^{-\frac12}\left( \Gamma\right) \cap BV\left( \Gamma\right)\right) \\
& &
\qquad \textrm{ s.t. for all } p\in H^1(\Omega) \, \int_\Gamma p \bu \cdot \bn \, \D s=\int_\Omega \bU \cdot \nabla p \, \dx     \Big\},
\end{eqnarray*}
and the functional $J^*_{\rm mod}$ by
	\begin{equation}\label{pb:well-posed_mod_2}
	J^*_{\rm mod}\left(\bU, \bu\right) :=\frac12\|\bU \|^2_{L^{2}(\Omega)} + \int_{\Gamma}\left|\bt  +\Delta t \, \dot{\bu}\right|\, \D s .
\end{equation}

\begin{prop}
	The minimization problem \eqref{pb:well-posed} -- \eqref{pb:well-posed_2} is equivalent to the following one:
	\begin{equation}\label{pb:well-posed_mod}
		\inf_{\left(\bU, \bu\right)\in W(\Omega)} J^*_{\rm mod}\left(\bU, \bu\right).
	\end{equation}
\end{prop}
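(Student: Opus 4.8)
The plan is to show the two minimization problems have the same infimum and that minimizers correspond to each other, by exhibiting a bijection-like correspondence between admissible $\psi \in BV(\Gamma)$ (for \eqref{pb:well-posed_bis}) and admissible pairs $(\bU,\bu) \in W(\Omega)$ (for \eqref{pb:well-posed_mod}) that preserves the value of the functional. The key observation is that the definition of the norm $\|\psi\cdot\bn\|_{H^{-1/2}(\Gamma)}$ is itself an infimum over divergence-free extensions $\bU$ with $\bU\cdot\bn = \psi\cdot\bn$ on $\Gamma$; and the constraint defining $W(\Omega)$, namely $\int_\Gamma p\,\bu\cdot\bn\,\D s = \int_\Omega \bU\cdot\nabla p\,\dx$ for all $p\in H^1(\Omega)$, is exactly the weak formulation of "$\Div\bU=0$ in $\Omega$ and $\bU\cdot\bn = \bu\cdot\bn$ on $\Gamma$" (take $p\in H^1_0$ to get $\Div\bU=0$, then integrate by parts to read off the trace).

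First I would fix $\psi \in BV(\Gamma)$ with $\int_\Gamma \psi\cdot\bn\,\D s = 0$ (the compatibility condition making the $H^{-1/2}$ norm finite) and unfold the definition:
\[
J_{\rm mod}(\psi) = \inf_{\substack{\Div\bU=0\\ \bU\cdot\bn = \psi\cdot\bn}} \|\bU\|_{L^2(\Omega)} + V\big((\Id+\Delta t\,\psi)(x)\big).
\]
Then I would identify $V\big((\Id+\Delta t\,\psi)(x)\big)$, evaluated on the curve $\Gamma$ parametrized by arc length, with $\int_\Gamma |\bt + \Delta t\,\dot\psi|\,\D s$ — this is precisely formula \eqref{def:perm_def} from the proof of Lemma \ref{lem:shape_der_per}, since $(\Id+\Delta t\,\psi)$ maps $\Gamma$ to the deformed interface and its arc-length derivative is $\bt + \Delta t\,\dot\psi$. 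Hence $J_{\rm mod}(\psi) = \inf_{\bU} \big( \|\bU\|_{L^2(\Omega)} + \int_\Gamma|\bt+\Delta t\,\dot\psi|\,\D s\big)$ over divergence-free $\bU$ with the prescribed normal trace. Taking the infimum over $\psi$ as well, and noting that the second term depends only on $\bu := \psi$, one sees
\[
\inf_{\psi} J_{\rm mod}(\psi) = \inf_{(\bU,\bu)\in W(\Omega)} \Big( \|\bU\|_{L^2(\Omega)} + \int_\Gamma |\bt + \Delta t\,\dot\bu|\,\D s \Big),
\]
where I must check that the weak constraint in $W(\Omega)$ is equivalent to "$\Div\bU=0$, $\bU\cdot\bn=\bu\cdot\bn$" as noted above (the case $p\equiv\text{const}$ also recovers the compatibility condition $\int_\Gamma\bu\cdot\bn\,\D s=0$).

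The remaining discrepancy is cosmetic: \eqref{pb:well-posed_mod_2} uses $\frac12\|\bU\|_{L^2}^2$ rather than $\|\bU\|_{L^2}$. I would address this by the standard remark that for a minimization of $\Phi(\bU) + C$ over a set closed under the scaling structure here, working with $\frac12\|\bU\|^2$ versus $\|\bU\|$ changes the optimal $\bU$ only by the observation that both are minimized at the same $\bU$ for fixed $\bu$ (the map $t\mapsto \frac12 t^2$ and $t\mapsto t$ are both increasing on $[0,\infty)$, so the inner minimization over $\bU$ with $\bu$ fixed picks the same minimizer, namely the minimal-norm divergence-free extension), and then the outer problems over $\bu$ have the same minimizers; alternatively the paper may simply intend $J^*_{\rm mod}$ as the genuinely equivalent reformulation up to this monotone reparametrization, which I would state explicitly. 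I expect this point — reconciling the quadratic versus linear appearance of the $L^2$ term, and being precise about what "equivalent" means (same minimizers of $\bu$, with $\bU$ recovered as the minimal extension) — to be the only real subtlety; the rest is unwinding definitions and invoking \eqref{def:perm_def}.
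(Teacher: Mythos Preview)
Your approach is the paper's: the entire proof there consists of the single observation that $\int_\Gamma \bu\cdot\bn\,\D s=0$ is equivalent to the existence of $\bU\in (L^2(\Omega))^2$ with $\Div\bU=0$ and $\bU\cdot\bn=\bu\cdot\bn$ on $\Gamma$, which is exactly your unpacking of the constraint defining $W(\Omega)$ together with the definition of the $H^{-1/2}$ norm. Your identification of the total-variation term with $\int_\Gamma|\bt+\Delta t\,\dot\bu|\,\D s$ via \eqref{def:perm_def} is likewise what the paper has in mind (and does not spell out).

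One caution on the point you yourself flag. Your first proposed resolution of the $\|\bU\|_{L^2}$ versus $\tfrac12\|\bU\|_{L^2}^2$ mismatch is not correct as stated: it is true that for fixed $\bu$ both are minimized at the same $\bU^*(\bu)$ (the minimal-norm divergence-free extension), but the \emph{outer} problems $\min_\bu\big(\|\bU^*(\bu)\|+g(\bu)\big)$ and $\min_\bu\big(\tfrac12\|\bU^*(\bu)\|^2+g(\bu)\big)$ need not share minimizers in general --- composing with an increasing function before adding $g$ changes the balance. The paper's own proof says nothing about this discrepancy, so your fallback reading (that ``equivalent'' here is meant loosely, the quadratic form being the natural one for deriving the Euler--Lagrange system \eqref{eq:FV_well-posed}) is the right way to interpret the statement; just do not claim literal equality of minimizers without further argument.
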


\begin{proof}
	The proof relies on the following equivalence:
	\[
	\int_\Gamma \bu \cdot \bn \, \D s = 0,
	\]
	is equivalent to
	\[
	\begin{cases}
	\textrm{there exists }\bU \in \left( L^2(\Omega)\right) ^2 \textrm{ such that } \\	
	\Div \bU = 0 \textrm{ in } \Omega \textrm{ and  } \bU \cdot \bn = \bu \cdot \bn \textrm{ almost everywhere on } \Gamma.
	\end{cases}
	\]
	
\end{proof}

Define the variational formulation:
\begin{equation}\label{eq:FV_well-posed}
	\begin{cases}
		\textrm{find } \left(\bU, \bu\right)\in \left(L^2(\Omega)\right)^2 \times \left( H^{-\frac12}\left( \Gamma\right) \cap BV\left( \Gamma\right)\right) \\
		\textrm{and } p\in H^1 (\Omega) \textrm{ that solve} \\
		\int_{\Omega}\bU\cdot\mathbf{V} \, \dx +\int_{\Omega} \nabla p \cdot \mathbf{V} \, \dx 
		+ \sigma \int_{\Gamma}
		\widetilde{\bH}
		\cdot \mathbf{v} \, \D s =0\, , \\
		\int_{\Gamma} q \, \bu\cdot \bn  \, \D s =0\, ,\\
		\textrm{ for any smooth test functions } \mathbf{V}:\Omega\to\R^2 \, , \bv \in BV(\Gamma ^i) \\ \textrm{ and } q:\Omega^i\to\R \textrm{ s.t. } \bv_{\Gamma}=0\, . 		 
	\end{cases}
\end{equation}

\begin{prop}
	The variational formulation \eqref{eq:FV_well-posed} is the Euler equation for the minimization problem \eqref{pb:well-posed} -- \eqref{pb:well-posed_2}.
\end{prop}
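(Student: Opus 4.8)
The plan is to obtain \eqref{eq:FV_well-posed} as the Euler--Lagrange system of the minimization problem \eqref{pb:well-posed_mod}, which by the equivalence established just above is the same as \eqref{pb:well-posed}--\eqref{pb:well-posed_2}. First I would introduce a Lagrange multiplier $p\in H^1(\Omega)$ for the affine constraint defining $W(\Omega)$ and form the Lagrangian
\[
\mathcal L(\bU,\bu,p)=J^*_{\rm mod}(\bU,\bu)+\int_\Omega \bU\cdot\nabla p\,\dx-\int_\Gamma p\,\bu\cdot\bn\,\D s
\]
on $\left(L^2(\Omega)\right)^2\times\left(H^{-1/2}(\Gamma)\cap BV(\Gamma)\right)\times H^1(\Omega)$, so that a pair $(\bU,\bu)$ belongs to $W(\Omega)$ exactly when $\partial_p\mathcal L$ vanishes against every $q\in H^1(\Omega)$; this stationarity in $p$ is precisely the incompressibility/compatibility line $\int_\Gamma q\,\bu\cdot\bn\,\D s=0$ of \eqref{eq:FV_well-posed} (taking $q$ constant gives the global constraint $\int_\Gamma\bu\cdot\bn\,\D s=0$, and general $q$ gives $\Div\bU=0$ together with $\bU\cdot\bn=\bu\cdot\bn$ on $\Gamma$).

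Next I would compute the remaining two partial Gâteaux derivatives. Differentiating $\mathcal L$ with respect to $\bU$ in a direction $\mathbf V\in\left(L^2(\Omega)\right)^2$ produces $\int_\Omega\bU\cdot\mathbf V\,\dx+\int_\Omega\nabla p\cdot\mathbf V\,\dx$. Differentiating with respect to $\bu$ in a direction $\bv\in BV(\Gamma)$ produces $-\int_\Gamma p\,\bv\cdot\bn\,\D s$ from the bulk pairing, plus the first variation of the perimeter term of $J^*_{\rm mod}$; for the latter I invoke verbatim the computation in the proof of Lemma \ref{lem:shape_der_per}, namely that the first-order expansion of $\int_\Gamma|\bt+\Delta t\,(\dot\bu+\dot\bv)|\,\D s$ equals $\Delta t\int_\Gamma\Tau\cdot\dot\bv\,\D s$, and an integration by parts on $\Gamma$ together with $\Hup=-\tfrac{\D}{\D s}\Tau$ rewrites this as $\Delta t\int_\Gamma\Hup\cdot\bv\,\D s$. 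Collecting terms and identifying $\widetilde\bH=\Hup$ (so that the perimeter prefactor carries the surface tension constant) gives exactly the momentum line $\int_\Omega\bU\cdot\mathbf V\,\dx+\int_\Omega\nabla p\cdot\mathbf V\,\dx+\sigma\int_\Gamma\widetilde\bH\cdot\bv\,\D s=0$, where the boundary pressure contributions cancel against the admissibility restriction imposed on the test pair $(\mathbf V,\bv)$. Reading the same identities backwards shows conversely that any solution of \eqref{eq:FV_well-posed} is a critical point, and convexity of $J^*_{\rm mod}$ (a quadratic term plus the convex perimeter functional) upgrades "critical point" to "minimizer".

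The main obstacle is the differentiability of the perimeter term at a general $\bu\in BV(\Gamma)$, where $\dot\bu$ is only a measure, so the first-order expansion of $\int_\Gamma|\bt+\Delta t\,\dot\bu|\,\D s$ and the subsequent integration by parts that produces $\Hup$ are not immediately licensed. I would deal with this by first deriving the Euler equation against smooth competitors and smooth variations, then using the approximation-by-$\mathcal C^\infty$ lemma recalled above to pass to the limit while keeping control of the total-variation term, or alternatively by exploiting the regularity already available near a minimizer over a $\mathcal C^{2,1}$ reference domain with small displacement, where $\Tau$ and its arc-length derivative are genuine functions (this is the setting in which the earlier analysis of the tangential component was carried out). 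A secondary technical point is the existence of the multiplier $p\in H^1(\Omega)$: this is where De Rham's theorem (Theorem \ref{th:deRahm}) enters, applied to the linear functional $\mathbf V\mapsto\int_\Omega\bU\cdot\mathbf V\,\dx+\sigma\int_\Gamma\widetilde\bH\cdot(\mathbf V\cdot\bn)\,\D s$, which vanishes on divergence-free fields and is therefore a gradient; the coupling between the bulk test field $\mathbf V$ and the interface test field $\bv$ through $\mathbf V\cdot\bn=\bv\cdot\bn$ must then be tracked carefully to land on the precise form of \eqref{eq:FV_well-posed}.
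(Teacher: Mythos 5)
Your argument follows essentially the same route as the paper: introduce the Lagrangian coupling $J^*_{\rm mod}$ with the multiplier $p$ for the constraint defining $W(\Omega)$, and obtain the boundary term from the first variation of the deformed perimeter via Lemma \ref{lem:shape_der_per}, i.e.\ $\frac{1}{\Delta t}F'(\bu)(\bv)=\int_\Gamma \nabla\bv\,\bt\cdot\Tau\,\D s=\int_\Gamma\widetilde\bH\cdot\bv\,\D s$, which is exactly identity \eqref{eq:Implicit shape derivative_0} used in the paper. Your additional remarks on the $BV$ differentiability of the perimeter term and on De Rham's theorem for the multiplier only flesh out steps the paper compresses into ``the result then follows.''
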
	

\begin{proof}
		Define the lagrangian $\mathcal L_{\rm mod}$ by
	\[
	\mathcal L_{\rm mod} (\bu,\bU, p) = \frac12 \int_{\Omega} |\bU |^2 \, \dx + \int_\Gamma \left( \bu \cdot \bn \right) p \, \D s-  \int_{\Omega} \bU \cdot \nabla  p \, \dx+ \sigma \mathcal{P} ((\Id+\Delta t \, \bu)(\Omega))\, . 
	\]

	Let $F(\bu)$ be the perimeter functional in the deformed configuration, i.e.,
	\begin{equation}
		F(\bu) = \mathcal{P} ((\Id+\Delta t \, \bu)(\Omega))=\int_\Gamma \left|(\bI+\Delta t \nabla \bu) \bt\right| \, \D s\, .\label{eq:perimeter functional}
	\end{equation}
	Then, 
	\[
	F'(\bu)(\mathbf{v})=
	\mathcal{P}'\left((\Id + \Delta t \, \bu)(\Omega);\mathbf{v}\right) \, ,
	\]
	and using Lemma \ref{lem:shape_der_per}, we compute the first order expansion of F at $\bu$.

	\begin{lemm}
		It holds that 
		\begin{equation}
			\int_{\Gamma} \widetilde{\bH}\cdot\bv \, \D s= \frac{1}{\Delta t} F'(\bu)(\mathbf{v})
			=  \int_{\Gamma^i}\nabla \mathbf{v}\,\bt \cdot \Tau \, \D s .
			\label{eq:Implicit shape derivative_0}    
		\end{equation}
	\end{lemm}
The result then follows.
\end{proof}		
	\begin{rema}
		Note that the expression on the right-hand side of \eqref{eq:Implicit shape derivative_0} absolves us from the difficult task of computing $\widetilde{\bH}$ directly. That being said, our problem is still nonlinear through the dependence of $\Tau$ on $\bu$, see \eqref{eq:Tau i+1}.
	\end{rema}

	\subsection{A Newton algorithm to solve \eqref{eq:FV_well-posed} } \label{Newton method}
	
	The main difficulty in solving \eqref{eq:FV_well-posed} is to find a method to handle the nonlinear boundary term,  \eqref{eq:Implicit shape derivative_0}. 
	
	We solve the problem \eqref{eq:FV_well-posed} by a Newton method by seeking, for $\bu$ given, a correction $\delta \bu $ such that $(\delta \bu,p)$ is solution of the linearized system
	\begin{equation}
		\begin{aligned}
			\int_{\Omega}\left(\bu +\delta \bu \right)\cdot\mathbf{v} \, \dx-\int_{\Omega} p(\nabla \cdot\mathbf{v}) \, \dx 
			&- \int_{\Omega}\left(\nabla\cdot\left(\bu +\delta \bu \right)\right) q \, \dx
			\\
			+ & \frac{\sigma}{\Delta t} F'(\bu) (\mathbf{v}) +\frac{\sigma}{\Delta t}  F''(\bu) \left(\delta \bu,\mathbf{v}\right)=0 \, .
		\end{aligned}, \label{eq:var linearization}
	\end{equation}
	
	\begin{rema}
		The last term in Eq. \eqref{eq:var linearization} is a bilinear form. It is obtained by finding the second order asymptotic expansion of $F$ at $\bu$.
	\end{rema}
	
	More precisely, we proceed iteratively. At each time step, it consists of computing a sequence $(u^k)_k$ where $\bu^0=0$ and $ \bu^{k+1} \in \left(H^1(\Omega)\right) ^2 $, $p \in L^2(\Omega)$ are solutions to the following variational problem
	\begin{equation}
		\begin{aligned}
			\int_{\Omega}\bu^{k+1}\cdot\mathbf{v} \, \dx&-\int_{\Omega} p(\nabla \cdot\mathbf{v}) \, \dx 
			- \int_{\Omega}(\nabla\cdot\bu^{k+1})q \, \dx
			\\
			+ &\frac{\sigma}{\Delta t} F'(\bu^k)(\mathbf{v})+ \frac{\sigma}{\Delta t}  F''(\bu^k) \left(\bu^{k+1}-\bu^k,\mathbf{v}\right)  =0 
		\end{aligned}    \label{eq:var droplet linearized}
	\end{equation}
	for any arbitrary smooth test functions $\mathbf{v}$, $q$.
	
	Assuming that this method converges, i.e., 
	$
	\lim_{k\to\infty} |\bu^{k+1}-\bu^k|\to 0
	$, 
	we shall denote by $\bu$ the limit of $(u^k)_k$ for $k\to\infty$. The remaining challenge is to compute $F''$.
	
	\subsubsection*{Second order expansion of the deformed perimeter}
	To apply our Newton like method we must first perform an asymptotic expansion of order two of the perimeter functional in the deformed configuration.

	Let us compute the second order Taylor expansion of $F$ defined by \eqref{eq:perimeter functional}.
	
	Let $\D s$, $\bt$ and $\bn$ denote respectively the unit length, the tangent and the outward pointing unit normal vectors in the current configuration $\Omega$, and $\D S$, $\Tau$ and $\N$ are respectively the unit length, the tangent and the outward pointing unit normal vectors in the deformed configuration $(\Id + \Delta t   \bu)(\Omega)$, "pulled back" in the coordinate system of the current configuration.

	\begin{lemm}
		It holds that
		\begin{equation}
			F''(\bu)(\delta \bu,\mathbf{v})=\Delta t^2 \int_\Gamma \frac{ \left(\nabla \delta\bu\,\bt\cdot\N \right)\left(\nabla \mathbf{v}\,\bt\cdot\N \right)}{\D S} \D s\, .\label{eq:F(u) 2nd order}
		\end{equation}
	\end{lemm}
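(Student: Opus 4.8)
The plan is to compute the second-order Taylor expansion of $F(\bu+\delta\bu)$ directly from the integrand $|(\bI+\Delta t\,\nabla\bu)\bt|$, treating $\delta\bu$ as the increment. Write $A(\bu):=(\bI+\Delta t\,\nabla\bu)\bt = \bt+\Delta t\,\nabla\bu\,\bt$ so that $F(\bu)=\int_\Gamma |A(\bu)|\,\D s$ and $A(\bu+\delta\bu)=A(\bu)+\Delta t\,\nabla\delta\bu\,\bt$. The function $g(X):=|X|$ on $\R^2\setminus\{0\}$ has first differential $Dg(X)[h]=\frac{X\cdot h}{|X|}$ and second differential $D^2g(X)[h,h]=\frac{|h|^2}{|X|}-\frac{(X\cdot h)^2}{|X|^3}=\frac{1}{|X|}\bigl(|h|^2-(\hat X\cdot h)^2\bigr)$, where $\hat X=X/|X|$; equivalently $D^2g(X)[h,k]=\frac{1}{|X|}\bigl(h\cdot k-(\hat X\cdot h)(\hat X\cdot k)\bigr)$, i.e. $|X|^{-1}$ times the projection onto $\hat X^\perp$. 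Applying this with $X=A(\bu)$, $h=\Delta t\,\nabla\delta\bu\,\bt$ (and symmetrically with a second increment $\mathbf v$) gives the bilinear form obtained by polarization.

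The key observation that collapses the projection term to a single scalar is that $\hat X=A(\bu)/|A(\bu)|=\Tau$ by the definition \eqref{eq:Tau i+1}, and that the orthogonal complement of $\Tau$ in $\R^2$ is spanned by $\N$, the pulled-back deformed normal; moreover $|A(\bu)|=|(\bI+\Delta t\,\nabla\bu)\bt|=\D S$ by \eqref{def:perm_def}. Hence the projection of any vector $h$ onto $\Tau^\perp$ is $(h\cdot\N)\N$, so $h\cdot k-(\Tau\cdot h)(\Tau\cdot k)=(h\cdot\N)(k\cdot\N)$ in two dimensions. Taking $h=\Delta t\,\nabla\delta\bu\,\bt$ and $k=\Delta t\,\nabla\mathbf v\,\bt$ and integrating over $\Gamma$ yields
\[
F''(\bu)(\delta\bu,\mathbf v)=\Delta t^2\int_\Gamma \frac{(\nabla\delta\bu\,\bt\cdot\N)(\nabla\mathbf v\,\bt\cdot\N)}{\D S}\,\D s,
\]
which is exactly \eqref{eq:F(u) 2nd order}. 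The first-order term reproduces \eqref{eq:Implicit shape derivative_0}, consistent with Lemma \ref{lem:shape_der_per}.

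The routine but slightly delicate points are: justifying the expansion under the integral (for $\Delta t$ small, $A(\bu)$ stays uniformly bounded away from $0$ since $|\bt|=1$ and $\|\nabla\bu\|$ is controlled on $\mathcal S$, so $g$ is smooth along the segment and dominated convergence applies), and being careful with the orientation convention so that $\Tau^\perp$ is genuinely $\pm\N$ — only the squared quantity $(\,\cdot\,\cdot\N)^2$ appears after polarization, so the sign ambiguity is harmless. The main conceptual step, and the one worth stating cleanly, is the two-dimensional identity $h\cdot k-(\Tau\cdot h)(\Tau\cdot k)=(\N\cdot h)(\N\cdot k)$, which is what turns the generic Hessian of the Euclidean norm into the compact boundary expression above; everything else is bookkeeping of the chain rule $F(\bu)=\int_\Gamma g(A(\bu))\,\D s$ with $A$ affine in $\bu$.
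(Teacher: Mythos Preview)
Your proposal is correct and follows essentially the same approach as the paper: a direct second-order Taylor expansion of the integrand $|(\bI+\Delta t\,\nabla\bu)\bt|$ in the affine variable $\bu$. The paper's own proof is in fact just a one-line statement of the resulting expansion \eqref{eq:F(u) 2nd order} without intermediate steps, so your explicit computation of the Hessian of $g(X)=|X|$ together with the two-dimensional identity $h\cdot k-(\Tau\cdot h)(\Tau\cdot k)=(\N\cdot h)(\N\cdot k)$ supplies exactly the details the paper omits.
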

	\begin{proof}
		We see that
		\begin{equation}
			\begin{aligned}
				F(\bu+\delta \bu)=&F(\bu) + \Delta t \int_\Gamma \nabla \delta \bu \, \bt \cdot \Tau \D s \\
				&+ \frac{\Delta t^2}{2} \int_\Gamma \frac{ \left(\nabla \delta\bu\,\bt\cdot\N \right)^2}{\D S} \D s + O(||\nabla \delta \bu||^3) \, .
			\end{aligned} \label{eq:F(u) 2nd order}
		\end{equation}	
	\end{proof}
	
	\subsubsection*{First linearized problem}
	Substituting \eqref{eq:Implicit shape derivative_0} and \eqref{eq:F(u) 2nd order} into our Newton like method, \eqref{eq:var droplet linearized}, gives
	\begin{equation}
		\begin{aligned}
			&\int_{\Omega}\bu^{k+1}\cdot\mathbf{v} \, \dx-\int_{\Omega} p(\nabla \cdot\mathbf{v}) \, \dx 
			-  \int_{\Omega}(\nabla\cdot\bu^{k+1})q \, \dx
			\\
			& + \sigma \int_\Gamma \nabla \mathbf{v} \, \bt \cdot \Tau^k \D s+ \sigma\Delta t \int_\Gamma \frac{(\nabla \delta \bu^{k+1}\, \bt\cdot \N^k)(\nabla \mathbf{v}\, \bt\cdot \N^k)}{\D S^k} \D s  =0 
		\end{aligned}    \label{eq:First linearized}
	\end{equation}
	where $\delta\bu^{k+1}:=(\bu^{k+1}-\bu^{k})$ and
	\[
	\D S^k=|(\I +\Delta t\, \nabla \bu^k)\bt|, \quad \Tau^k=\frac{(\I +\Delta t\, \nabla \bu^k)\bt}{|(\I +\Delta t\, \nabla \bu^k)\bt|},\quad \N^k=\Tau^{k^\perp}\, .
	\]
	
	\begin{rema}
		We emphasize the fact that \eqref{eq:First linearized} does not contain second-order spatial derivatives, meaning that the curvature vector has been transformed into a term involving only the first spatial derivatives (hence less regularity is needed).
	\end{rema}
	
	This system in the variables ($ \bu^{k+1}$, $p$) 
	does not necessarily admit a solution (let alone unique) due to the possible lack of coercivity of the bilinear form. Moreover, even if solutions to \eqref{eq:First linearized} exist, the convergence of this iterative Newton method, \eqref{eq:var droplet linearized} with \eqref{eq:First linearized}, is not granted. The straighforward remedy is to replace the bilinear term in \eqref{eq:First linearized} by a coercive one, which can be done in various ways. The classical scheme is to compute the eigenvectors and eigenvalues of the matrix and to construct the modified term using only the eigenspaces associated with the positive eignevalues. 
	
	For simplicity and robustness of our algorithm, we prefer to predetermine a closed formula for the modified matrix which is coercive. 
	
	\subsubsection*{Modified problem}
	The main obstacle in our current method is that the bilinear form in \eqref{eq:First linearized} is not positive definite. Therefore, we want to define a modified problem of \eqref{eq:var droplet linearized} -- \eqref{eq:First linearized} with a positive-definite matrix.

	\begin{lemm}
		It holds that
		\begin{equation}\label{eq:F(u) 2nd order_2}
			F(\bu+\delta \bu)\leq  F(\bu)+ \Delta t  \int_\Gamma  \nabla \delta \bu\,\bt \cdot \Tau \D s  + \frac{\Delta t^2}{2} \int_\Gamma \frac{(\nabla \delta \bu \,\bt )\cdot (\nabla \delta \bu \,\bt )}{\D S}\D s\, .
		\end{equation}
	\end{lemm}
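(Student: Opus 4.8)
The inequality \eqref{eq:F(u) 2nd order_2} is a majorization of the exact second-order expansion \eqref{eq:F(u) 2nd order} in which the rank-one term $(\nabla\delta\bu\,\bt\cdot\N)^2$ has been replaced by the full square $(\nabla\delta\bu\,\bt)\cdot(\nabla\delta\bu\,\bt) = |\nabla\delta\bu\,\bt|^2$. Since $\N$ is a unit vector, $(\nabla\delta\bu\,\bt\cdot\N)^2 \le |\nabla\delta\bu\,\bt|^2$ pointwise, so morally the claim says: the error we make by enlarging the quadratic term dominates the higher-order remainder $O(\|\nabla\delta\bu\|^3)$ with the \emph{correct sign}. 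I would prove this not by keeping the Taylor remainder but by establishing the inequality exactly, via convexity.

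The plan is to argue directly from the definition \eqref{eq:perimeter functional}, $F(\bu)=\int_\Gamma|(\bI+\Delta t\,\nabla\bu)\bt|\,\D s$. First I would write $(\bI+\Delta t\,\nabla(\bu+\delta\bu))\bt = \nabla\varphi\,\bt + \Delta t\,\nabla\delta\bu\,\bt$, where $\nabla\varphi\,\bt = (\bI+\Delta t\,\nabla\bu)\bt$ has length $\D S$ and direction $\Tau$. Then the integrand is $|\,\D S\,\Tau + \Delta t\,\nabla\delta\bu\,\bt\,|$. The key step is the elementary vector inequality: for any $a,b\in\R^2$ with $a\ne 0$,
\[
|a+b| \le |a| + \frac{a\cdot b}{|a|} + \frac{|b|^2}{2|a|}.
\]
Applying this pointwise with $a = \D S\,\Tau$ (so $|a|=\D S$, $a/|a|=\Tau$) and $b = \Delta t\,\nabla\delta\bu\,\bt$ gives exactly
\[
|(\bI+\Delta t\,\nabla(\bu+\delta\bu))\bt| \le \D S + \Delta t\,\nabla\delta\bu\,\bt\cdot\Tau + \frac{\Delta t^2}{2}\,\frac{|\nabla\delta\bu\,\bt|^2}{\D S},
\]
and integrating over $\Gamma$ yields \eqref{eq:F(u) 2nd order_2}, since $\int_\Gamma \D S\,\D s = F(\bu)$ and $|\nabla\delta\bu\,\bt|^2 = (\nabla\delta\bu\,\bt)\cdot(\nabla\delta\bu\,\bt)$.

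It remains to justify the scalar inequality $|a+b|\le|a|+\frac{a\cdot b}{|a|}+\frac{|b|^2}{2|a|}$. The cleanest route is to square the candidate right-hand side minus $\frac{a\cdot b}{|a|}$ is not quite the move; instead, note that by the triangle/Cauchy–Schwarz-type bound it suffices to show $|a+b|^2 \le \big(|a|+\tfrac{a\cdot b}{|a|}+\tfrac{|b|^2}{2|a|}\big)^2$ whenever the right-hand side is nonnegative. Expanding the left side gives $|a|^2 + 2a\cdot b + |b|^2$; expanding the right side and using $\frac{(a\cdot b)^2}{|a|^2}\le |b|^2$ (Cauchy–Schwarz) to discard a favorable term, one checks the difference is a sum of manifestly nonnegative terms (the leftover being essentially $\frac{|b|^4}{4|a|^2}$ plus cross terms that combine into a square). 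Equivalently — and this is the conceptually transparent argument — the function $t\mapsto |a+tb|$ is convex, hence lies below its chord, but here one wants an upper bound by a quadratic, so the right statement is: $|a+b| = |a|\,\sqrt{1 + 2\tfrac{a\cdot b}{|a|^2} + \tfrac{|b|^2}{|a|^2}} \le |a|\big(1 + \tfrac{a\cdot b}{|a|^2} + \tfrac{|b|^2}{2|a|^2}\big)$ using $\sqrt{1+x}\le 1+\tfrac{x}{2}$ with $x = 2\tfrac{a\cdot b}{|a|^2}+\tfrac{|b|^2}{|a|^2} \ge -1$ (the latter bound because $|a+b|^2\ge 0$). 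This is exactly the inequality needed. The main obstacle is purely bookkeeping: verifying that $x\ge -1$ so that $\sqrt{1+x}\le 1+x/2$ applies — which holds since $1+x = |a+b|^2/|a|^2 \ge 0$ — and then recognizing $|\nabla\delta\bu\,\bt|^2=(\nabla\delta\bu\,\bt)\cdot(\nabla\delta\bu\,\bt)$ to match the stated form. There is genuinely no hard analytic content here; the lemma is a pointwise concavity estimate on $\sqrt{\cdot}$ integrated over the boundary, and its role is to furnish a coercive surrogate for $F''$ in the Newton scheme.
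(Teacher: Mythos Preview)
Your argument is correct and essentially identical to the paper's: both proofs expand $|(\bI+\Delta t\,\nabla(\bu+\delta\bu))\bt|^2$ exactly and then apply the elementary inequality $\sqrt{1+x}\le 1+x/2$ pointwise before integrating over $\Gamma$. Your presentation is in fact slightly cleaner in that you frame it via the vector inequality $|a+b|\le |a|+\tfrac{a\cdot b}{|a|}+\tfrac{|b|^2}{2|a|}$ and explicitly check the hypothesis $1+x=|a+b|^2/|a|^2\ge 0$, a point the paper leaves implicit.
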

	\begin{proof}
		From the inequality $\sqrt{1+x}\leq 1+x/2$ it follows  that
		\begin{equation}
			\begin{aligned}
				&|(\I+\Delta t \, \nabla(\bu+\delta\bu))\bt|\\
				& \quad =
				\Big( |(\I+\Delta t\, \nabla\bu)\bt|^2+2\Delta t \nabla \delta \bu\, \bt \cdot (\bt+\Delta t \, \nabla\bu\, \bt) +\Delta t^2 |\nabla \delta \bu\, \bt |^2\Big)^{1/2}\\
				&\quad \leq\, |(\I+\Delta t \, \nabla\bu)\bt|+\Delta t \,\nabla\delta\bu \, \bt \cdot \Tau + \frac{\Delta t^2 |\nabla\delta\bu\,\bt|^2 }{2 \D S} \, ,
			\end{aligned}
		\end{equation}
		where $\D S$ and $\Tau$ are defined by \eqref{eq:First linearized}.
	\end{proof}

	We stress that here, unlike \eqref{eq:F(u) 2nd order}, the bilinear form on the right-hand side of \eqref{eq:F(u) 2nd order_2} is positive definite. We choose to adopt this term in our modified Newton like method. 
	
	To recapitulate, in each time step we omit the index $i$ and compute a sequence $(u^k)_k$, where $\bu^0=0$ and $ \bu^{k+1} \in H^1(\Omega)^2 $, $p \in L^2(\Omega)$ are solutions of the following variational problem 
	\begin{equation}
		\begin{aligned}
			\int_{\Omega}\bu^{k+1}\cdot\mathbf{v} \, \dx&-\int_{\Omega} p(\nabla \cdot\mathbf{v}) \, \dx 
			-  \int_{\Omega}(\nabla  \cdot\bu^{k+1})q \, \dx
			\\
			& + \sigma \int_\Gamma \nabla \mathbf{v} \, \bt \cdot \Tau^k \D s+ \sigma\Delta t \int_\Gamma \frac{(\nabla \delta \bu^{k+1}\, \bt) \cdot (\nabla \mathbf{v}\,\bt) }{\D S^k} \D s  =0 
		\end{aligned}    \label{eq:Second linearized}
	\end{equation}
	for all arbitrary smooth test functions $\mathbf{v}:\Omega \to \R^2$, $q:\Omega\to\R$, and where 
	\[
	\D S^k=|(\I +\Delta t\, \nabla \bu^k)\bt|, \quad \Tau^k=\frac{(\I +\Delta t\, \nabla \bu^k)\bt}{|(\I +\Delta t\, \nabla \bu^k)\bt|}
	\]
	
	In our algorithm, the method is applied recursively until the stopping criteria based on the computation of the global residual is satisfied. We set the Newton tolerance to $10^{-5}$ in our computations, i.e., 
	\[
	\int_\Omega  \left| \delta \bu^{k+1}\right|^2=\int_\Omega  \left| \bu^{k+1} - \bu^k\right|^2 <10^{-5}
	\]
	
	When this condition is satisfied, we take ($\bu^{k+1}$, $p$) as the approximate solution for ($\bu^{i+1}$, $p^{i+1}$) in the time-discrete PDE problem. All that is left is to propagate the domain via Eq. \eqref{eq:discrete propagation}.

\section{Variational formulation associated with \eqref{eq:Otto_3} -- \eqref{eq:func_J_alpha}}\label{sec:stab_scheme}

Let $\alpha>0$, define
\begin{equation}\label{def:J_alpha}
J_\alpha(\Omega,\bu)= \frac 1 2 \int_\Omega |\bu|^2 \, \dx+ \frac \sigma{\Delta t} \mathcal P(\varphi(\Omega)) + \frac \alpha 2 \int_\Omega |\nabla\wedge \bu|^2 \, \dx.
\end{equation}
\begin{prop}
	Let $\Omega$ be a $C^2$ open set of $\R^2$. Any minimizer $\bu$ of $J_\alpha(\Omega,\cdot)$ with respect to $\bu$ over the set of zero divergence fields satisfies
	\begin{equation}\label{EDP2}
	\left\{
	\begin{array}{ll}
		\bu-\alpha \nabla^\perp(\nabla\wedge \bu)+\nabla p =0 &\text{ in }\Omega,\\
		\nabla\cdot \bu=0&\text{ in }\Omega,\\
		p\bn=\sigma \Hup+\alpha(\nabla\wedge \bu) \bt&\text{ on }\partial\Omega.
	\end{array}
	\right.
	\end{equation}
\end{prop}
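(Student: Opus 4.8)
The plan is to derive the Euler--Lagrange system of $J_\alpha(\Omega,\cdot)$ over the admissible set $\{\Div\,\bu=0\}$ by first writing criticality in weak form for arbitrary smooth divergence-free variations, and then \emph{localizing}: test fields supported in the interior give the bulk equation (with the pressure produced by De Rham's theorem), while boundary test fields give the boundary condition.

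First I would compute, at a minimizer $\bu$, the first variation of each of the three pieces of $J_\alpha$ in a smooth direction $\delta\bu$ with $\Div\,\delta\bu=0$. The kinetic term yields $\int_\Omega\bu\cdot\delta\bu\,\dx$. For the perimeter term, Lemma~\ref{lem:shape_der_per} gives directly $\frac{\sigma}{\Delta t}\langle\mathcal P'(\varphi(\Omega));\delta\bu\rangle=\sigma\int_{\partial\Omega}\Hup\cdot\delta\bu\,\D s$. For the curl penalization, differentiation gives $\alpha\int_\Omega(\nabla\wedge\bu)(\nabla\wedge\delta\bu)\,\dx$, and the planar integration-by-parts identity
\[
\int_\Omega(\nabla\wedge\delta\bu)\,f\,\dx=-\int_\Omega\delta\bu\cdot\nabla^\perp f\,\dx+\int_{\partial\Omega}f\,(\delta\bu\cdot\bt)\,\D s
\]
(with $f=\nabla\wedge\bu$) turns it into $-\alpha\int_\Omega\delta\bu\cdot\nabla^\perp(\nabla\wedge\bu)\,\dx+\alpha\int_{\partial\Omega}(\nabla\wedge\bu)(\delta\bu\cdot\bt)\,\D s$. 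Summing, criticality of $\bu$ reads: for every smooth divergence-free $\delta\bu$,
\[
\int_\Omega\bigl(\bu-\alpha\nabla^\perp(\nabla\wedge\bu)\bigr)\cdot\delta\bu\,\dx+\int_{\partial\Omega}\bigl(\sigma\Hup+\alpha(\nabla\wedge\bu)\,\bt\bigr)\cdot\delta\bu\,\D s=0.
\]

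Next I would take $\delta\bu$ compactly supported in $\Omega$ and divergence-free, so that the boundary integral vanishes and $\int_\Omega(\bu-\alpha\nabla^\perp(\nabla\wedge\bu))\cdot\delta\bu\,\dx=0$ for all such $\delta\bu$. De Rham's theorem (Theorem~\ref{th:deRahm}, applied as in the proof of Lemma~\ref{lem:hom_second_membre}) then provides $p$, unique up to an additive constant, with $\bu-\alpha\nabla^\perp(\nabla\wedge\bu)+\nabla p=0$ in $\Omega$; this is the first line of \eqref{EDP2}, the second line being the constraint itself. Plugging $\bu-\alpha\nabla^\perp(\nabla\wedge\bu)=-\nabla p$ back into the criticality identity and integrating by parts, $-\int_\Omega\nabla p\cdot\delta\bu\,\dx=\int_\Omega p\,\Div\,\delta\bu\,\dx-\int_{\partial\Omega}p\,(\delta\bu\cdot\bn)\,\D s=-\int_{\partial\Omega}p\,(\delta\bu\cdot\bn)\,\D s$, leaves
\[
\int_{\partial\Omega}\bigl(-p\,\bn+\sigma\Hup+\alpha(\nabla\wedge\bu)\,\bt\bigr)\cdot\delta\bu\,\D s=0\qquad\text{for all divergence-free }\delta\bu.
\]

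The delicate point --- and the one place where the argument is not purely mechanical --- is the last step: the boundary traces of smooth divergence-free fields on $\Omega$ are not arbitrary but fill exactly the vector fields $\bg$ on $\partial\Omega$ with $\int_{\partial\Omega}\bg\cdot\bn\,\D s=0$ (given such a $\bg$, take $\delta\bu=\nabla^\perp\psi$ with $\psi|_{\partial\Omega}$ the primitive of $\bg\cdot\bn$ along $\partial\Omega$ --- well defined precisely because $\bg\cdot\bn$ has zero mean --- and $\partial_n\psi|_{\partial\Omega}=-\bg\cdot\bt$, extended arbitrarily into $\Omega$). Hence $-p\,\bn+\sigma\Hup+\alpha(\nabla\wedge\bu)\,\bt$ is $L^2(\partial\Omega)$-orthogonal to $\{\bg:\int_{\partial\Omega}\bg\cdot\bn\,\D s=0\}$, so it equals $c\,\bn$ for a constant $c$; absorbing $c$ into $p$ (legitimate since $p$ was only determined up to a constant) gives $p\,\bn=\sigma\Hup+\alpha(\nabla\wedge\bu)\,\bt$, the third line of \eqref{EDP2}. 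I would also keep track of the regularity bookkeeping --- an admissible $\bu$ has $\bu\in L^2$ and $\nabla\wedge\bu\in L^2$, so the bulk identity holds a priori in $H^{-1}$ with $p\in L^2$, and elliptic regularity for the resulting div--curl system (using the $C^2$ smoothness of $\partial\Omega$) lifts $\bu$ to a space where the boundary traces above make sense --- but this is routine next to the trace-space characterization, which is the real crux.
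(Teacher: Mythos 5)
Your proposal is correct and follows essentially the same route as the paper's proof: compute the first variation of the three terms (Lemma \ref{lem:shape_der_per} for the deformed perimeter), integrate the curl term by parts to produce the bulk contribution $-\alpha\nabla^\perp(\nabla\wedge \bu)$ and the boundary contribution $\alpha(\nabla\wedge \bu)\,\bt$, and invoke De Rham's theorem to introduce the pressure. The paper stops at the weak stationarity identity and that single integration by parts, leaving implicit the localization and the identification of the pointwise boundary condition; your extra steps (the characterization of traces of divergence-free fields as the boundary data with zero mean normal flux, and the absorption of the resulting constant multiple of $\bn$ into $p$) are a correct completion of that implicit part, up to an immaterial sign convention in the $\nabla^\perp\psi$ construction.
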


\begin{proof}
		For any test function $v$, we have
		\[
		\int_\Omega \bu\cdot \bv \, \dx + \alpha \int_\Omega (\nabla \wedge \bu)(\nabla \wedge \bv)\, \dx+\sigma \int_{\partial \Omega} \Hup\cdot \bv \, \D s
		=
		-\int_\Omega \nabla p\cdot \bv \, \dx+ \int_{\partial\Omega} p(\bv\cdot \bn) \, \D s.
		\]
		By integration by parts, we see that 
		\[
		\int_\Omega  (\nabla \wedge \bu)(\nabla \wedge \bv)\, \dx=-\int_\Omega \nabla^\perp(\nabla \wedge \bu)\cdot \bv \, \dx + \int_{\partial \Omega} (\nabla\wedge \bu)\bn\wedge \bv \, \D s.
		\]
\end{proof}

Define the variational formulation:
\begin{equation}\label{eq:FV_curl}
	\begin{cases}
		\textrm{find } \bu\in L^2(\Omega) 
		\textrm{and } p\in H^1 (\Omega) \textrm{ that solve} \\
		\int_{\Omega}\bu\cdot\bv \, \dx + \alpha \int_{\Omega} (\nabla\wedge \bu)\cdot (\nabla\wedge \bv)\, \dx  +\int_{\Omega} \nabla p \cdot \bv \, \dx 
		+ \sigma \int_{\Gamma}
		\widetilde{\bH}
		\cdot \mathbf{v} \, \D s =0\, , \\
		\int_{\Gamma} q \, \bu\cdot \bn  \, \D s =0\, ,\\
		\textrm{ for any smooth test functions } \bv:\Omega\to\R^2 \textrm{ and } q:\Omega^i\to\R \, . 		 
	\end{cases}
\end{equation}

\begin{prop}
	The variational formulation \eqref{eq:FV_curl} is the Euler equation for the minimization problem of \eqref{eq:Otto_3} -- \eqref{eq:func_J_alpha}.
\end{prop}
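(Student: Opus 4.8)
The argument follows the same pattern as the proof that \eqref{eq:FV_well-posed} is the Euler equation of \eqref{pb:well-posed}--\eqref{pb:well-posed_2}, the curl penalty being the only genuinely new term. First I would adjoin the pressure $p$ as a Lagrange multiplier for the incompressibility constraint and work on $\left(L^2(\Omega)\right)^2\times H^1(\Omega)$ with the Lagrangian
\[
\mathcal L_\alpha(\bu,p)=\frac12\int_\Omega|\bu|^2\,\dx+\frac\alpha2\int_\Omega|\nabla\wedge\bu|^2\,\dx+\int_\Gamma(\bu\cdot\bn)\,p\,\D s-\int_\Omega\bu\cdot\nabla p\,\dx+\sigma\,\mathcal P(\varphi(\Omega)).
\]
By the reasoning already used in Proposition~\ref{prop:liens_edp_min}, a divergence-free minimizer $\bu$ of $J_\alpha(\Omega,\cdot)$ together with the associated multiplier is a critical point of $\mathcal L_\alpha$; moreover, since for $\alpha>0$ each volume term and the deformed perimeter $\int_\Gamma|(\I+\Delta t\,\nabla\bu)\bt|\,\D s$ is convex in $\bu$, the functional $J_\alpha(\Omega,\cdot)$ is convex, so being a critical point is equivalent to being the minimizer and it is enough to identify the first-order optimality condition.

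Second, I would compute $\langle\mathcal L_\alpha'(\bu,p);(\bv,q)\rangle$ term by term. The quadratic term gives $\int_\Omega\bu\cdot\bv\,\dx$; the penalty gives $\alpha\int_\Omega(\nabla\wedge\bu)(\nabla\wedge\bv)\,\dx$; the two multiplier terms give, in the $\bv$-slot, $\int_\Gamma(\bv\cdot\bn)p\,\D s-\int_\Omega\bv\cdot\nabla p\,\dx=\int_\Omega p\,\Div\bv\,\dx$ and, in the $q$-slot, $\int_\Gamma(\bu\cdot\bn)q\,\D s-\int_\Omega\bu\cdot\nabla q\,\dx$, which vanishes for all $q$ exactly when the incompressibility relation of \eqref{eq:FV_curl} holds. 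The one delicate term is $\sigma\,\mathcal P(\varphi(\Omega))$: by Lemma~\ref{lem:shape_der_per} and the identity \eqref{eq:Implicit shape derivative_0} its derivative in the direction $\bv$ equals $\sigma\int_\Gamma\widetilde{\bH}\cdot\bv\,\D s=\sigma\int_\Gamma\nabla\bv\,\bt\cdot\Tau\,\D s$, a boundary integral in which the curvature never has to be differentiated directly. Summing the contributions and requiring the result to vanish for all admissible $(\bv,q)$ — after the sign normalization of the pressure already used in Proposition~\ref{prop:liens_edp_min} — is precisely the pair of equations in \eqref{eq:FV_curl}; integrating the curl term by parts as in the preceding Proposition then turns \eqref{eq:FV_curl} into the strong system \eqref{EDP2}, which closes the equivalence.

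Third, for the converse direction I would argue exactly as in Proposition~\ref{prop:liens_edp_min} and Lemma~\ref{lem:hom_second_membre}: starting from a solution of the reduced problem on $\{\Div\bv=0\}$, the linear map $\bv\mapsto\langle J_\alpha'(\bu);\bv\rangle$ vanishes on divergence-free fields, so De Rham's theorem (Theorem~\ref{th:deRahm}) provides a pressure $p\in H^1(\Omega)$ for which the first equation of \eqref{eq:FV_curl} holds after integration by parts; conversely, testing \eqref{eq:FV_curl} against divergence-free $\bv$ recovers the vanishing of $\langle J_\alpha'(\bu);\bv\rangle$ on $\{\Div\bv=0\}$, and convexity upgrades ``critical point'' to ``minimizer.''

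The main obstacle I expect is not the algebra but the functional-analytic bookkeeping around the constraint and the boundary term. One has to pin down the right space for $\bu$ — finite $L^2$ norm together with $\nabla\wedge\bu\in L^2(\Omega)$ and $\Div\bu=0$, which is what endows $\bu$ with a well-defined normal trace on $\Gamma$ so that $\int_\Gamma(\bu\cdot\bn)p$ and $\int_\Gamma q\,\bu\cdot\bn$ are meaningful — verify that De Rham's theorem applies on the $C^2$ domain $\Omega$, and, most delicately, justify that the Eulerian derivative \eqref{def:shape_der_per} of the deformed perimeter coincides with the boundary integral of Lemma~\ref{lem:shape_der_per} when $\bu$ is only as regular as this space allows, since the asymptotic expansion proving that lemma was written for smooth deformation fields. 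Once these points are settled, matching the resulting weak equations with \eqref{eq:FV_curl} is immediate.
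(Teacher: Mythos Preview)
Your proposal is correct and follows essentially the same route as the paper: compute the G\^ateaux derivative of $J_\alpha$ term by term, invoking Lemma~\ref{lem:shape_der_per} for the deformed-perimeter contribution, and then apply De~Rham's theorem (Theorem~\ref{th:deRahm}) as in Proposition~\ref{prop:liens_edp_min} to produce the pressure multiplier. Your version is more detailed --- you write the constraint term in the boundary form $\int_\Gamma(\bu\cdot\bn)p-\int_\Omega\bu\cdot\nabla p$ rather than the paper's bulk form $-\int_\Omega(\Div\bu)p$, add a convexity observation, and flag the trace and regularity issues --- but these are elaborations, not a different argument.
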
	
\begin{proof}
	The functional $J_\alpha$ is G\^ateaux differentiable with respect to $\bu$. Denoting by $J_\alpha'$ its  G\^ateaux derivative and using lemma \ref{lem:shape_der_per}, we obtain for all $\bv\in \left(H^1(\Omega)\right)^2$, 
	\begin{eqnarray}
		\langle J_\alpha'(\Omega,\bu);\bv\rangle
		&=&\int_\Omega \bu\cdot \bv \, \dx+ \sigma \int_{\partial \varphi(\Omega)} \bH_{\varphi}\cdot (\bv\circ \varphi^{-1}) \, \D s_\varphi \nonumber \\
		& & \qquad +  \alpha \int_{\Omega} (\nabla\wedge \bu)\cdot (\nabla\wedge \bv)\, \dx\nonumber \\
		&=& \int_\Omega \bu\cdot \bv \, \dx+ \sigma \int_{\partial\Omega} \Hup \cdot \bv \, \D s	\nonumber \\
		& & \qquad +  \alpha \int_{\Omega} (\nabla\wedge \bu)\cdot (\nabla\wedge \bv)\, \dx\, .\label{derivee}
	\end{eqnarray}

Define the lagrangian $\mathcal L_\alpha ^*$ by
\[
\mathcal L^* (\bu, p) = \frac12 \int_{\Omega} |\bu |^2 \, \dx+ \frac \alpha 2 \int_\Omega |\nabla\wedge \bu|^2 \, \dx-  \int_{\Omega} \left( \Div \bu\right) p \, \dx+ \sigma \int_{\partial \Omega} \Hup \cdot \bu \, \D s\, . 
\]
We proceed as in the proof of Proposition \ref{prop:liens_edp_min} by applying de Rahm's Theorem \ref{th:deRahm} to the linear form $\langle \mathcal L_\alpha ^{*'}(\bu,p);\cdot\rangle $, to deduce that if $\bu$ is a minimizer of $J(\Omega,\cdot)$ over the set of divergence-free fields, then there exists $p$ such that for all Lipschitz test function $\bv$ and $q$,
\begin{equation}\label{deRahm}
	\langle  \mathcal L_\alpha ^{*'}(\bu,p);(\bv,q)\rangle = \int_\Omega p \nabla\cdot \bv \, \dx\, .
\end{equation}
\end{proof}

\begin{rema}
	Note that a priori, it does not seem obvious that the obtained scheme is consistent, due to the presence of the term $\alpha \nabla\wedge \bu$.
	By taking the rotational of the first equation of the system \eqref{EDP2}  and the dot product with the tangent $\tau$ at the boundary for the last equation of \eqref{EDP2}, we get
	\[
	\left\{
	\begin{array}{ll}
		\nabla\wedge \bu-\Delta (\alpha \nabla\wedge \bu) = 0&\text{ in }\Omega\\
		\alpha (\nabla\wedge \bu)=-\sigma \Hup\cdot\bt &\text{ on }\partial\Omega
	\end{array}
	\right.
	\]
	The boundary term is of order $\Delta t$. Indeed, if $\bu$ is smooth, $C^2$ e.g., we have
	\[
	\Hup =\nabla((\Id+\Delta t \nabla \bu)\bt) \cdot \bt = \Delta t \partial^2_s \bu\cdot \bt.
	\]
	In particular we deduce
	\[
	\alpha \|\nabla \wedge \bu\|_{H^1(\Omega)} \leq C(\Omega) \, \Delta t \, \sigma \|\partial_s^2\bu \cdot \bt\|_{H^{1/2}(\partial\Omega)}.
	\]
	The stabilization term therefore introduces an error of order $\Delta t$. Note that the introduced consistency error is independent of $\alpha$.
\end{rema}

\section{Variational formulation associated with \eqref{eq:Otto_4} -- \eqref{espace:incompressible}}\label{sec:nonlinear}

In order to alleviate the problem of the existence of the time-discretized scheme proposed in the previous section, another option consists in replacing the linearized incompressibility constraint $\nabla\cdot \bu=0$ by the nonlinear constraint  $\det(\nabla \varphi)=1$.

\begin{prop}
Let $\bu$ be a minimizer of $J$ under the constraint
\[
\det(\nabla \varphi)=1\text{ in }\Omega,
\]
with $\varphi=\Id+\Delta t \bu$. Then, there exists a Lagrange multiplier $p$ such that 
\[
\left\{
    \begin{array}{rcl}
     \bu+(\Cof \nabla \varphi) \nabla p=0&\text{ in }&\Omega \\
    \det(\nabla \varphi)=1&\text{ in }&\Omega\\
     p\bn =\Hu \circ\varphi&\text{ on }&\partial\Omega.
    \end{array}
\right.
\]
\end{prop}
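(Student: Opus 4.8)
The plan is to derive the Euler--Lagrange system by computing the first variation of $J$ subject to the pointwise nonlinear constraint $\det(\nabla\varphi)=1$, and then to use de Rham's theorem to produce the Lagrange multiplier $p$. First I would recall that $J(\Omega,\bu)=\frac12\int_\Omega|\bu|^2\,\dx+\frac{\sigma}{\Delta t}\mathcal P(\varphi(\Omega))$ with $\varphi=\Id+\Delta t\,\bu$, and that the admissible directions $\bv$ at a constrained minimizer $\bu$ are those satisfying the linearized constraint. Since $\frac{\diff}{\diff\epsilon}\det(\nabla(\varphi+\Delta t\,\epsilon\bv))\big|_{\epsilon=0}=\Delta t\,\Cof(\nabla\varphi):\nabla\bv=\Delta t\,\cn\big((\Cof\nabla\varphi)^T\bv\big)$ (using the Piola identity $\cn(\Cof\nabla\varphi)=0$), the tangent space to the constraint manifold at $\bu$ is $\{\bv\in(H^1(\Omega))^2:\cn((\Cof\nabla\varphi)^T\bv)=0\text{ in }\Omega\}$.

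Next I would compute $\langle J'(\Omega,\bu);\bv\rangle$ for such admissible $\bv$. The quadratic term contributes $\int_\Omega\bu\cdot\bv\,\dx$. For the perimeter term, Lemma \ref{lem:shape_der_per} gives $\frac{\diff}{\diff\epsilon}\mathcal P((\Id+\Delta t(\bu+\epsilon\bv))(\Omega))\big|_{\epsilon=0}=\Delta t\int_{\partial\varphi(\Omega)}\bH_\varphi\cdot(\bv\circ\varphi^{-1})\,\dS_\varphi=\Delta t\int_{\partial\Omega}\Hup\cdot\bv\,\dS$, so that $\frac{1}{\Delta t}F'(\bu)(\bv)=\int_{\partial\Omega}\Hup\cdot\bv\,\dS$. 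Hence stationarity reads
\[
\int_\Omega\bu\cdot\bv\,\dx+\sigma\int_{\partial\Omega}\Hup\cdot\bv\,\dS=0
\]
for all $\bv$ with $\cn((\Cof\nabla\varphi)^T\bv)=0$. To extract the multiplier, I would change variables $\bv=(\Cof\nabla\varphi)\bw$ (a bijection on $H^1$-type spaces for $\Delta t$ small, since $\nabla\varphi$ is invertible); then the constraint on $\bv$ becomes exactly $\cn\bw=0$, and, writing the linear functional in terms of $\bw$, apply de Rham's theorem (Theorem \ref{th:deRahm}) to obtain $p\in H^1(\Omega)$ with the stationarity condition holding modulo a gradient term tested against $\bw$. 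Unwinding the substitution yields $\bu\cdot\bv=-(\Cof\nabla\varphi)^T\bv\cdot\nabla p$ in the bulk, i.e. $\bu+(\Cof\nabla\varphi)\nabla p=0$ in $\Omega$, together with $\det(\nabla\varphi)=1$; the boundary contributions then force $p\bn=\Hu\circ\varphi$ on $\partial\Omega$, using the identity $\Hup=|\Cof\nabla\varphi|\,\bH_\varphi\circ\varphi$ recorded earlier and the transformation of the surface measure $\dS_\varphi=|\Cof\nabla\varphi\,\bt|\,\dS$.

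The main obstacle is the rigorous handling of the Lagrange multiplier for a \emph{pointwise nonlinear} constraint in the relevant function spaces: one must justify that the constraint map $\bu\mapsto\det(\Id+\Delta t\,\nabla\bu)-1$ is $C^1$ from $(H^1(\Omega))^2$ into a suitable target (delicately, $\det(\nabla\varphi)\in L^1$ only, so some care with the exact spaces or an additional regularity assumption on $\Omega$ and $\bu$ is needed), that its derivative is surjective at the minimizer (a solvability statement for $\cn((\Cof\nabla\varphi)^T\bv)=g$), and hence that the Lyusternik--Lagrange multiplier rule applies. Modulo this, the substitution $\bv=(\Cof\nabla\varphi)\bw$ together with the Piola identity reduces everything to the divergence-free setting already treated via de Rham's theorem in Proposition \ref{prop:liens_edp_min}, and the computation of the boundary term proceeds exactly as in the proofs of the preceding propositions.
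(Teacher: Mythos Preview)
Your overall strategy is sound and takes a genuinely different route from the paper. The paper does not invoke de Rham here at all: it writes down the Lagrangian $\mathcal L(\Omega;\bu,p)=J(\Omega;\bu)-\frac{1}{\Delta t}\int_\Omega(\det(\nabla\varphi)-1)\,p\,\dx$ directly and then \emph{pushes forward} to the deformed domain $\varphi(\Omega)$, setting $V=\bv\circ\varphi^{-1}$, $P=p\circ\varphi^{-1}$. In those coordinates the determinant linearizes to the ordinary divergence, $\det(\Id+\Delta t\,\nabla_y V)=1+\Delta t\,\nabla_y\cdot V+o(\bv)$, one integrates by parts on $\varphi(\Omega)$, and pulls back via $\nabla_yP=(\nabla\varphi)^{-T}\nabla_x p$ to obtain both the bulk equation and the boundary condition. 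Your route---Piola identity to identify the tangent space in the reference domain, then a linear substitution to reduce to divergence-free test fields and apply de Rham---is the reference-configuration counterpart and avoids working in a moving domain, at the cost of more cofactor bookkeeping; neither proof addresses the regularity issues you flag, so they sit at the same formal level.

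There is one concrete slip to fix. With the paper's convention $(\Cof\nabla\varphi)^T\nabla\varphi=\det(\nabla\varphi)\,\Id$ and $\det(\nabla\varphi)=1$, the linearized constraint reads $\cn\big((\Cof\nabla\varphi)^T\bv\big)=0$, so the substitution that turns this into $\cn\bw=0$ is $\bw=(\Cof\nabla\varphi)^T\bv$, i.e.\ $\bv=(\nabla\varphi)\bw$, \emph{not} $\bv=(\Cof\nabla\varphi)\bw$ as you wrote (the latter gives $(\Cof\nabla\varphi)^T(\Cof\nabla\varphi)\bw$, which is not $\bw$). With the correct substitution the bulk functional becomes $\bw\mapsto\int_\Omega\big((\nabla\varphi)^T\bu\big)\cdot\bw\,\dx$, de Rham on compactly supported $\bw$ yields $(\nabla\varphi)^T\bu=-\nabla p$, hence $\bu+(\Cof\nabla\varphi)\nabla p=0$, and then testing up to the boundary recovers $p\,\bn=\Hu\circ\varphi$ as you outline.
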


\begin{proof}
	In a classical way, to take into account the constraint of nonlinear incompressibility, one introduces the Lagrangian
	\begin{equation}\label{def:lagran}
	\mathcal L(\Omega;\bu,p)=J(\Omega;\bu)-\frac{1}{\Delta t}\int_\Omega (\det(\nabla \varphi)-1) p \, \dx.
	\end{equation}
	Define $V=\bv\circ \varphi^{-1}$ and  $P=p\circ \varphi^{-1}$. We first see that
	\[
	\nabla _\by V = \nabla_\bx \bv (\nabla_\bx \varphi)^{-1}\, ,
	\]
	with the notation $\by = \varphi (\bx)$.
	
	Moreover, we compute
	\begin{eqnarray*}
		\int_\Omega \det(\nabla \varphi + \Delta t \nabla  \bv) p \, \dx
		&=&
		\int_{\varphi(\Omega)} \det(\nabla _\bx\varphi)^{-1} \det(\nabla _\bx\varphi + \Delta t \nabla _\bx \bv) P \, \dx\\
		&=&
		\int_{\varphi(\Omega)}\det(\Id + \Delta t \nabla _\by V) P\, \dx\\
		&=&
		\int_{\varphi(\Omega)}(1+\Delta t \nabla_\by\cdot V) P \, \dx + o(\bv).
	\end{eqnarray*}
    
    Therefore, the G\^ateaux derivation of the last term of \eqref{def:lagran} is   
    \[
    -\frac{1}{\Delta t}\int_{\varphi(\Omega)}(\nabla_\by\cdot V) P \, \dx
    ,
    \]
    and integrating by parts we get
    \[
        \frac{1}{\Delta t}\int_{\varphi(\Omega)}(\nabla_\by\cdot V) P \, \dx
    =-\int_{\varphi(\Omega)} V\cdot \nabla_\by P \, \dx+ \int_{\partial \varphi(\Omega)} V\cdot \bn_\varphi P \D s.
    \]
    Finally, recalling that	$P\circ\varphi = p$, it follows that
    \[
       \nabla_\by P \nabla \varphi = \nabla_\bx p,
    \]
    hence
    \[
        \nabla \varphi^T \nabla_\by P = \nabla_\bx p\,
    \]   
    that is 
    \[
    \nabla_\by P=\nabla \varphi^{-T} \nabla_\bx p.
    \]
    
    We thus deduce that the G\^ateaux derivation of the last term of \eqref{def:lagran} is 
    \begin{eqnarray*}
        -\frac{1}{\Delta t}\int_{\varphi(\Omega)}(\nabla_\by\cdot V) P \, \dx
    &=&
        \int_\Omega \bv\cdot \nabla \varphi^{-T} \nabla_\bx p \det(\nabla \varphi) \, \dx
       - \int_{\partial \varphi(\Omega)} V\cdot \bn_\varphi P \, \D s\\
    &=&
         \int_\Omega \bv\cdot (\Cof \nabla \varphi) \nabla_x p \, \dx
        - \int_{\partial \varphi(\Omega)} V\cdot \bn_\varphi P \, \D s\, .
    \end{eqnarray*}

	If $\bu$ is a minimizer of $J$ with respect to $\bu$ under the constraint $\det(\nabla \varphi) = 1$, then $\bu$ is a critical point of the Lagrangian.
	Using compactly supported test functions in $\Omega$, we deduce that
    \[
        \bu+(\Cof \nabla \varphi) \nabla p=0 \quad \text{ in }\Omega\, ,
    \]
    and by using  any test functions in $\Omega$ we obtain the boundary condition
    \[
         p \bn_\varphi=\Hu  \quad \text{ on }\partial \varphi(\Omega)\, ,
    \]
    that is
    \[
        p\bn_\varphi \circ \varphi =\Hu \circ \varphi  \quad \text{ on }\partial\Omega\, .
    \]
\end{proof}

\section{\texorpdfstring{Verification: simulation-theory comparisons}{Verification: simulation-theory comparisons}}
\label{sec:verification}

\subsection{Space discretization}\label{sec:space_discre} 
Each subdomain is covered by a regular triangulation $\mathcal{T}_h$, with maximum mesh size $h$, and such that it is globally a conforming triangulation of $\Omega$, i.e., $\mathcal{T}_h$ contains a piecewise affine approximation $\Gamma_h$ of the interface $\Gamma$. 

In a classical manner, we approximate each component of the velocity in each element $K \in \mathcal{T}_h$ by a polynomial of degree one enriched with a ``bubble'' function (a polynomial of degree 3 defined as the product of the barycentric coordinates in $K$  and vanishing on the faces of $K$) and the pressure in each element by a polynomial of degree one. Both approximations are continuous across the element faces except for the pressure at the interface $\Gamma_h$. Hence, we consider the following discretizations of the spaces $X:=\left(H^1(\Omega)\right)^2$ and $M:=L^2(\Omega)$:
\[X_h=\{v_h\in \mathcal{C}^0(\bar \Omega) ^2\, ;  \ \forall K \in \mathcal{T}_h \, , \ v_{h|K}\in \left( \mathbb{P}_1+b_K\right)^2\}\cap X\, ,\]
and
\[M_h=\{q_h\in \mathcal{C}^0(\bar \Omega)  \, ;  \ \forall K \in \mathcal{T}_h \, , \ q_{h|K}\in  \mathbb{P}_1\}\cap M\, ,\]

The numerical experiments presented in the following sections were conducted using FreeFem++.

In this section, we validate our FEM simulation by comparing numerical experiments with theoretical predictions. 

To simulate the passive droplet problem (using our code in FreeFem++) one must specify the tension $\sigma$,  the numerical time step $\Delta t$ and the initial finite element domain (meaning the triangulation mesh $\mathcal{T}_h$). 
Let us first comment on the choice of the time step size. The convergence of the Newton algorithm is indeed quite sensitive to $\Delta t$. Values which are too large may lead to starting iterations in our recursive method which are far from the expected solutions. 
There exist several strategies for improving the choice of $\Delta t$ but we leave this topic beyond the scope of the present study. 
In each numerical experiment, we choose a value of $\Delta t$ that is 2--4 orders of magnitude smaller than the physical timescale. 
Building the finite element domain is done by inputting an explicit counter-clockwise paremeterization of the closed interface. Here, we use the polar parameterization $\left\{x(\theta),y(\theta)\right\}=\left\{R^0(\theta) \cos(\theta),R^0(\theta)\sin (\theta)\right\}$ for $\theta\in(0,2\pi)$. We define $R^0(\theta)=1+\delta R^0 (\theta)$, where we write $\delta R^0 (\theta) $ in terms of Fourier modes
\begin{equation}
	\delta R^0(\theta)=\sum_m \left(\delta R_{\text{c} m}^0 \cos (m\theta) + \delta R_{\text{s} m}^0 \sin(m\theta) \right) \label{eq:HS shape perturbation}
\end{equation}
Then, by specifying the number (or density) of vertices along the parameterized boundary, FreeFem++ automatically generates the internal triangulation mesh (see Fig. \ref{fig:ver FEM meshing}a,b). Note that one could also define an adaptive (non-uniform) mesh, as in Fig. \ref{fig:ver FEM meshing}c, which is designed to have a finer definition of vertices along the boundary. The motivation behind such a mesh is to improve the resolution of the shape itself while not 'hyper-meshing' the bulk and thereby drastically increasing computation time (of order $h^{-2}$). 

\begin{figure}
	\begin{center}
		\includegraphics[scale=0.65]{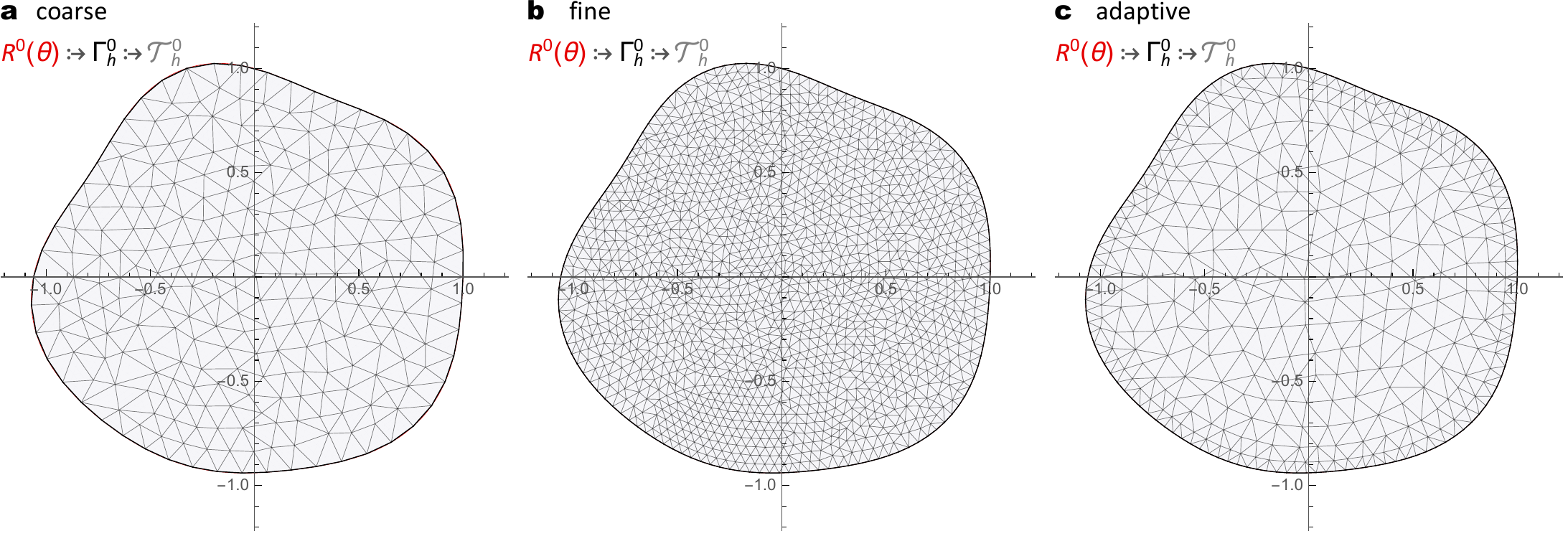}
		\caption{\textbf{Building different finite element domains for the same interface}. The parameterization $R^0(\theta)=1+\delta R^0 (\theta)$ (red, Eq. \eqref{eq:HS shape perturbation}) is used to define the affine approximation of the initial interface, $\Gamma_h^0$ (black). In turn, FreeFem++ uses $\Gamma_h^0$ to generate the internal tiangulation mesh $\mathcal{T}_h^0$ (gray). The density of the mesh is controlled by specifying the number of vertices on the interface. This number is increased from (a) to (b). The adaptive mesh (c) is constructed with two starting interfaces: one (dense) interface on the boundary and another (coarse) interface in the bulk. These define two triangulation meshes (a "disk" and a "ring") which are then joined to form a single connected mesh. \label{fig:ver FEM meshing}}
	\end{center}
\end{figure}

In each time step, we follow the algorithm outlined at the end of Section \ref{Newton method} (Eq. \eqref{eq:Second linearized}). 
The simulation data is saved at some fixed interval of time iterations (of order 10-100, depending on $\Delta t$ and the duration of the simulation). In each imported frame, we are able to reconstruct the finite-element domain and the interpolation functions for $\bu$ and $p$. In addition, we use the interface vertices to construct a polar piece-wise interpolation function of the boundary, $R^\text{sim}(\theta)$. 

Our objective is to square the simulation results with known characteristics of the passive droplet, namely:
\begin{itemize}
	\item Conservation of the droplet area, $\dot{A}=0$. 
	\item External force balance, 
	$\bu_\text{cm}=0$. 
	\item Morphological relaxation dynamics of linear shape perturbations. 
\end{itemize}
\begin{rema}
	There are several ways of computing $\bu_\text{cm}$. We define 
	\[
	\bu_\text{cm}=A^{-1}\int_{\partial\Omega} \mathbf{x}(\bu\cdot\bn)\D l.
	\]
\end{rema}
\begin{rema}
	To find the numerical growth rate of each normal mode, we first decompose $R^\text{sim}(\theta)$ into Fourier components,
	\begin{equation}
		\begin{aligned}
			& \delta R^\text{sim}_{\text{c}m}=\frac{1}{\pi}\int_0^{2\pi}\left( R^\text{sim}(\theta)-1 \right)\cos (m\theta)d\theta \\
			&\delta R^\text{sim}_{\text{s}m}=\frac{1}{\pi}\int_0^{2\pi}\left( R^\text{sim}(\theta)-1 \right)\sin (m\theta)d\theta
		\end{aligned}
		\label{eq:sim Fourier components}
	\end{equation}
	The growth rate of each such component is then obtained by fitting $\delta R_{\text{c}m}^\text{sim}(t)$ and $\delta R_{\text{s}m}^\text{sim}(t)$ to an exponential function $\varepsilon e^{s_m^\text{N} t}$. The idea is to compare the fitted $s_m^\text{N}$ with the classical cubic dispersion relation, $s_m=-\sigma m(m^2-1)$. 
\end{rema}

\begin{figure}
	\begin{center}
		\includegraphics[scale=0.6]{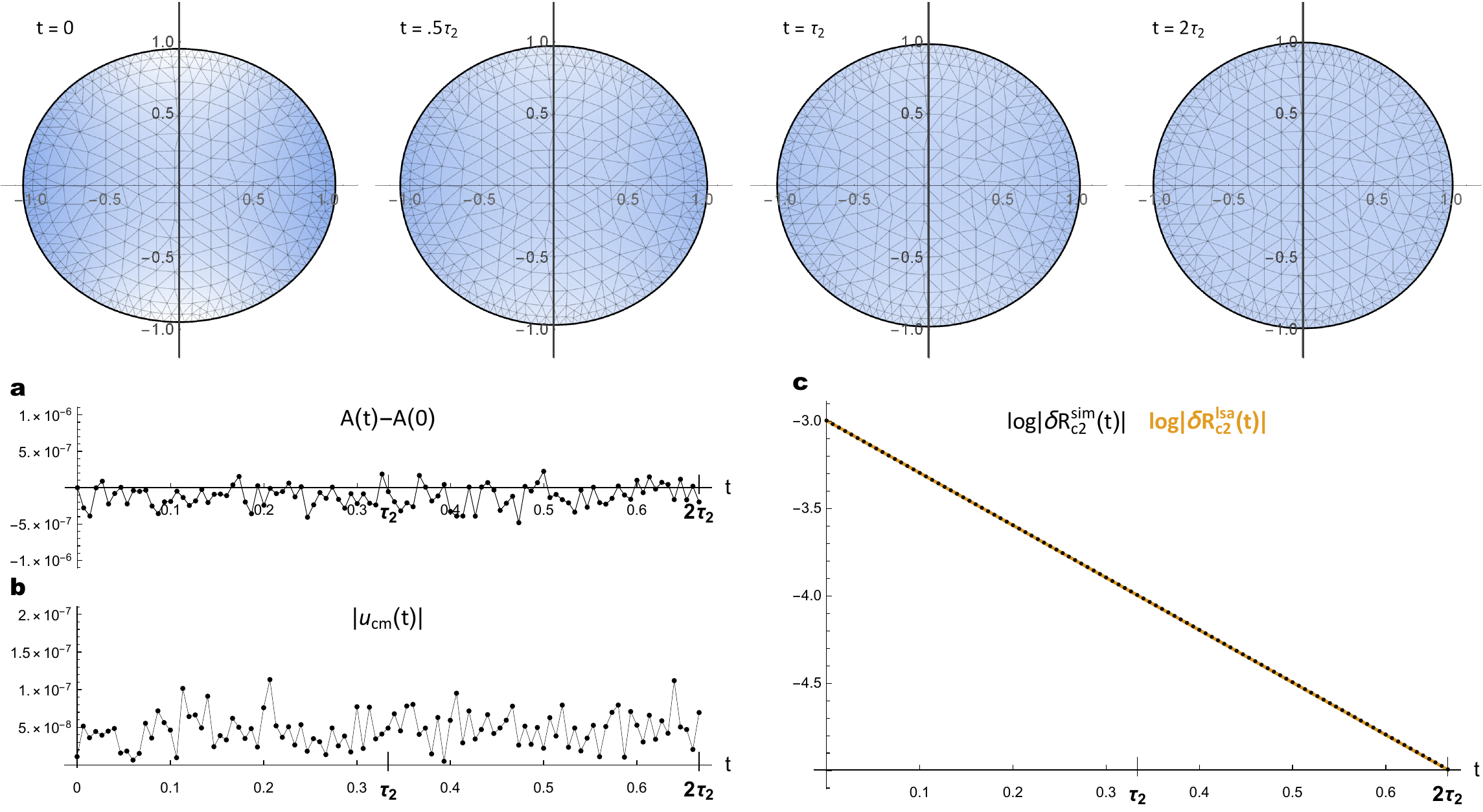}
		\caption{\textbf{Decaying shape perturbation (mode $m=2$)}: We set $\delta R_{\text{c}2}^0=.05$ in Eq. \eqref{eq:HS shape perturbation}, fixed $\sigma=0.5$, and chose $\Delta t=0.0005\tau_2$, where $\tau_2=|s_2|^{-1}=1/3$. 
			At the top we present simulation snapshots demonstrating the decay of the shape perturbation. The density plot in the bulk represents the pressure $p$ (from low in white to high in blue). (a) Time series for the domain area $A(t)$. (b) Time series for the absolute center of mass velocity $|\bu_\text{cm}(t)|$. (c) The predicted linear behaviour of the perturbed mode, $\log|\delta R_{\text{c}2}^\text{lsa}(t)|
			=\log|\delta R_{\text{c}2}^0|-3 t$ (continuous orange line) vs. the simulation time series, $\log|\delta R_{\text{c}2}^\text{sim}(t)|$ (black, computed via Eq. \eqref{eq:sim Fourier components}). We find that the fitted growth rate, $s_2^\text{N}\simeq-2.995$, compares well with the classical linear-stability growth rate, $s_2=-3$ (giving a deviation of $(s_2^\text{N}-s_2)/s_2\sim 2\times 10^{-3}$).\label{fig:ver HS m2}}
	\end{center}
\end{figure}

\begin{figure}
	\begin{center}
		\includegraphics[scale=0.6]{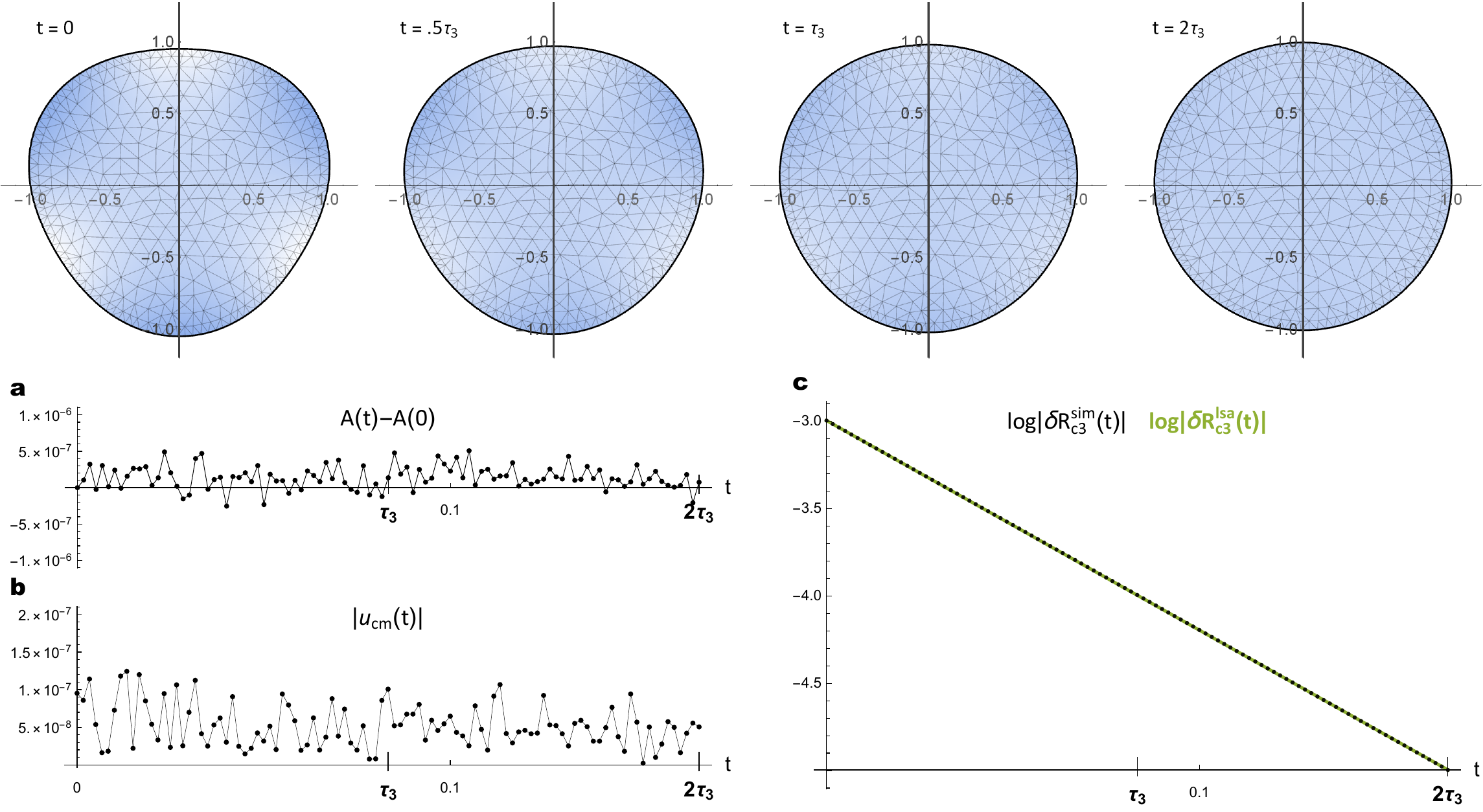}
		\caption{\textbf{Decaying shape perturbation (mode $m=3$)}: We set $\delta R_{\text{c}3}^0=.05$ in Eq. \eqref{eq:HS shape perturbation}, fixed $\sigma=0.5$, and chose $\Delta t=0.0005\tau_3$, where $\tau_3=|s_3|^{-1}=1/12$. 
			At the top we present simulation snapshots demonstrating the decay of the shape perturbation. The density plot in the bulk represents the pressure $p$ (from low in white to high in blue). (a) Time series for the domain area $A(t)$. (b) Time series for the absolute center of mass velocity $|\bu_\text{cm}(t)|$. (c) The predicted linear behaviour of the perturbed mode, $\log|\delta R_{\text{c}3}^\text{lsa}(t)|
			=\log|\delta R_{\text{c}3}^0|- 12 t$ (continuous green line) vs. the simulation time series, $\log|\delta R_{\text{c}3}^\text{sim}(t)|$ (black, computed via Eq. \eqref{eq:sim Fourier components}). We find that the fitted growth rate, $s_3^\text{N}\simeq-11.996$, compares well with the classical linear-stability growth rate, $s_3=-12$ (giving a deviation of $(s_2^\text{N}-s_2)/s_2\sim 4\times 10^{-4}$).\label{fig:ver HS m3}}
	\end{center}
\end{figure}

\begin{figure}
	\begin{center}
		\includegraphics[scale=0.6]{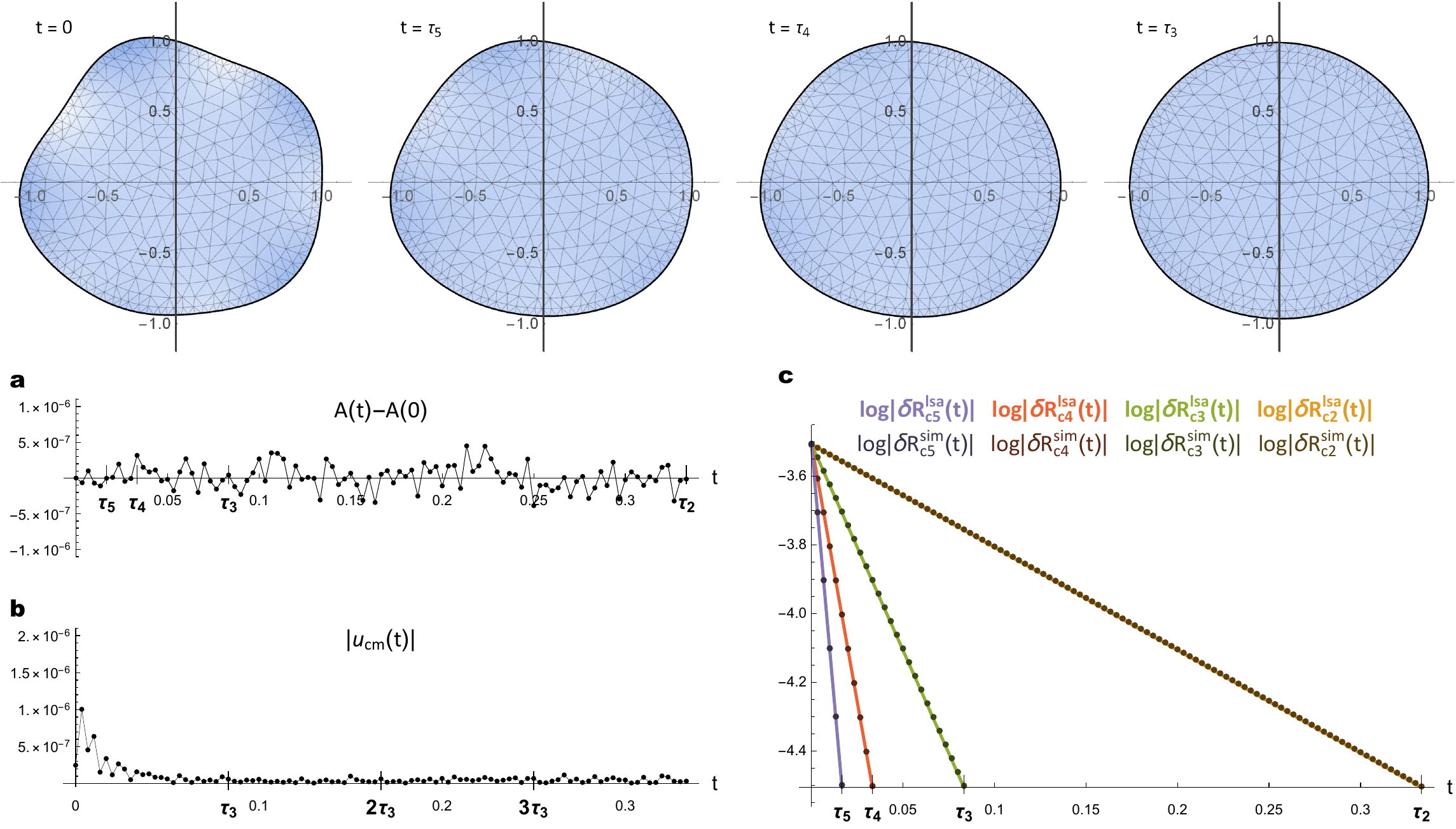}
		\caption{\textbf{Decaying shape perturbations (mixed modes $m=2$--$5$)}: We set $\delta R_{\text{c}2}=.03$, $\delta R_{\text{s}3}^0=-.03$, $\delta R_{\text{s}4}=.03$, and $\delta R_{\text{c}5}=-.03$ in Eq. \eqref{eq:HS shape perturbation}, fixed $\sigma=0.5$, and chose $\Delta t=0.005\tau_5$, where $\tau_5=|s_5|^{-1}=1/120$. 
			At the top we present simulation snapshots demonstrating the sequential decay of the perturbed shape modes. The density plot in the bulk represents the pressure $p$ (from low in white to high in blue). (a) Time series for the droplet area $A(t)$. (b) Time series for the absolute center of mass velocity $|\bu_\text{cm}(t)|$. (c) The predicted linear behaviour of each perturbed mode, $\log|\delta R_{\text{c,s}m}^\text{lsa}(t)|
			=\log|R_{\text{c,s}m}^0|-s_m t$ (continuous colored lines) vs. the simulation time series, $\log|\delta R_{\text{c,s}m}^\text{sim}(t)|$ (darker colors, computed via Eq. \eqref{eq:sim Fourier components}). We find that the fitted growth rates, ($s_2^\text{N}\simeq-2.99$, $s_3^\text{N}\simeq-11.91$, $s_4^\text{N}\simeq -29.83$, $s_5^\text{N}\simeq -59.50$) are all in good quantitative agreement with the classical linear-stability growth rates ($s_2=-3$, $s_3=-12$, $s_4=-30$, $s_5=-60$). 
			\label{fig:ver HS mixed}}
	\end{center}
\end{figure}

Results of three simulations are represented in Figs. \ref{fig:ver HS m2} -- \ref{fig:ver HS mixed}. 
In Figs. \ref{fig:ver HS m2} and \ref{fig:ver HS m3} we introduced an initial shape perturbation strictly in one Fourier mode ($m=2$ and $m=3$, respectively), whereas in Fig. \ref{fig:ver HS mixed} we introduced a superposition of small perturbations in $m=2$--$5$. For the time step and mesh density chosen, we find numerical deviations in $A(t)$ and $|\bu_\text{cm}(t)|$ as low as order $10^{-7}$. Moreover, the fitted numerical growth rates of the perturbed Fourier modes are also in good quantitative agreement with the classical dispersion relation $s_m$ (see details in figure captions). We stress that small deviations in the fitted growth rates may also arise from nonlinear effects which have been neglected in the calculation of $s_m$. As expected, we found through further experimentation with the numerics that precision is gained by decreasing $\Delta t$, increasing the overall mesh density and/or decreasing the initial perturbation amplitudes. 

\begin{figure}
	\begin{center}
		\includegraphics[scale=0.6]{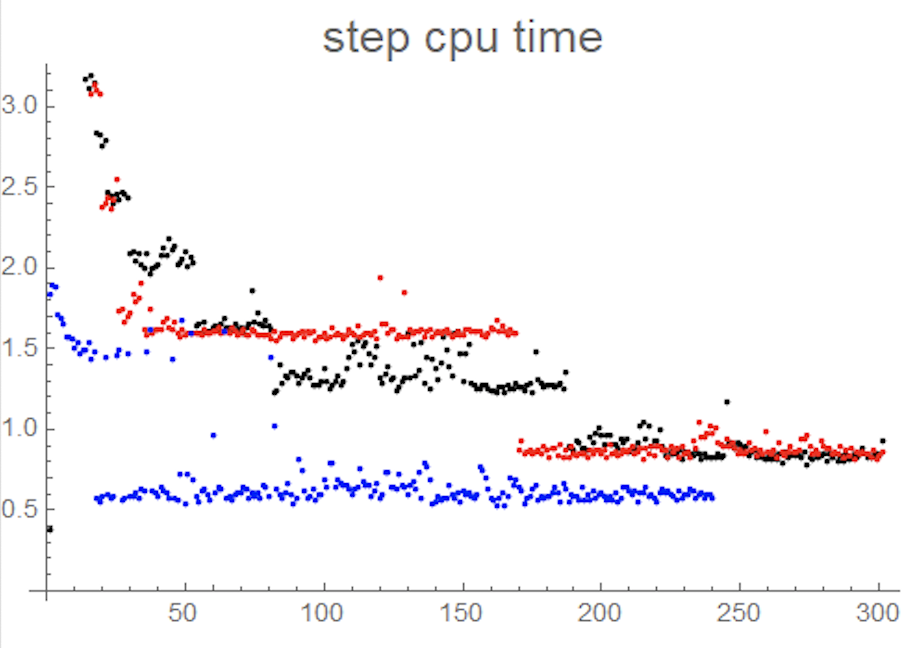}
		\caption{\textbf{Convergence time for the different schemes}
			\label{fig:cpu}}
	\end{center}
\end{figure}

\appendix

\section{De Rham's Theorem}\label{app-a}

In the setting of partial differential equations, the De Rham's theorem solves an over-determined system of linear partial differential equations of order one. 

We recall de Rham's theorem in the case of homogeneous flows (or currents) of dimension one on a Euclidean space:
\begin{theo}[de Rahm]\label{th:deRahm}
	Let $L : L^2(\Omega) ^2\to \R$ be a continuous linear form such that $L(\bv)=0$ for all $\bv \in L^2(\Omega)^2$ with $\nabla \cdot \bv =0$. Then, there exists $q \in H_0^1(\Omega)$ such that for all $\bv \in L^2(\Omega)^2$ 
	\[
	L(\bv)= \int _{\Omega} \nabla q \cdot \bv  \D \bx\, .
	\]
\end{theo}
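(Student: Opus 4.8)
\emph{Sketch of the intended proof.} The plan is to realize the statement as the abstract orthogonal decomposition of $L^2(\Omega)^2$ adapted to the divergence operator. First I would apply the Riesz representation theorem in the Hilbert space $L^2(\Omega)^2$ to write $L(\bv)=\int_\Omega \mathbf{f}\cdot\bv\,\D\bx$ for a unique $\mathbf{f}\in L^2(\Omega)^2$. The hypothesis on $L$ then says exactly that $\mathbf{f}$ is $L^2$-orthogonal to the subspace $V:=\{\bv\in L^2(\Omega)^2:\ \Div\,\bv=0\ \text{in}\ \mathcal{D}'(\Omega)\}$. So everything reduces to the identity $V^\perp=\na H_0^1(\Omega)$, where $\na H_0^1(\Omega):=\{\na q:\ q\in H_0^1(\Omega)\}$; granting this, $\mathbf{f}=\na q$ for some $q\in H_0^1(\Omega)$ and the announced formula $L(\bv)=\int_\Omega\na q\cdot\bv\,\D\bx$ follows immediately.

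I would establish $V^\perp=\na H_0^1(\Omega)$ in three short steps. (i) \emph{$G_0:=\na H_0^1(\Omega)$ is closed in $L^2(\Omega)^2$}: if $(\na q_n)_n$ is Cauchy in $L^2$, the Poincar\'e inequality on the bounded domain $\Omega$ gives $\|q_n-q_m\|_{L^2}\le C\|\na q_n-\na q_m\|_{L^2}$, so $(q_n)_n$ is Cauchy in $H_0^1(\Omega)$ and converges to some $q\in H_0^1(\Omega)$ with $\na q_n\to\na q$. (ii) \emph{$G_0^\perp=V$}: if $\bv\perp G_0$, then in particular $\int_\Omega\na q\cdot\bv\,\D\bx=0$ for every $q\in\mathcal{C}_c^\infty(\Omega)$, which reads $\langle\Div\,\bv,q\rangle_{\mathcal{D}',\mathcal{D}}=0$, hence $\Div\,\bv=0$ in $\mathcal{D}'(\Omega)$ and $\bv\in V$; conversely, if $\bv\in V$ then $\bv\in H(\operatorname{div},\Omega)$ (its distributional divergence is the zero $L^2$ function), so $\bv$ has a normal trace $\bv\cdot\bn\in H^{-1/2}(\partial\Omega)$ and the Green formula $\int_\Omega\na q\cdot\bv\,\D\bx=-\int_\Omega q\,\Div\,\bv\,\D\bx+\langle\bv\cdot\bn,q|_{\partial\Omega}\rangle_{\partial\Omega}$ has both right-hand terms zero for $q\in H_0^1(\Omega)$ (the first because $\Div\,\bv=0$, the second because $q|_{\partial\Omega}=0$), so $\bv\perp G_0$. (iii) Combining (i)–(ii) with the fact that $(S^\perp)^\perp=\overline S$ for a subspace $S$ of a Hilbert space, $V^\perp=(G_0^\perp)^\perp=\overline{G_0}=G_0=\na H_0^1(\Omega)$.

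Applying this to $\mathbf{f}\in V^\perp$ yields $q\in H_0^1(\Omega)$ with $\mathbf{f}=\na q$, which concludes. The delicate point is step (ii): verifying that a distributionally divergence-free $L^2$ field is $L^2$-orthogonal to \emph{all} of $\na H_0^1(\Omega)$, and not merely to $\na\mathcal{C}_c^\infty(\Omega)$. This uses the normal-trace theory for $H(\operatorname{div},\Omega)$ together with the density of $\mathcal{C}_c^\infty(\Omega)$ in $H_0^1(\Omega)$, and is the place where the smoothness of $\partial\Omega$ is genuinely used. Everything else is a direct appeal to the Riesz representation theorem, the projection theorem in a Hilbert space, and the Poincar\'e inequality.
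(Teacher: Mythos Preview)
Your argument is correct. The paper, however, does not supply a proof of this statement at all: Theorem~\ref{th:deRahm} is placed in Appendix~A and is simply \emph{recalled} as a classical fact (``We recall de Rham's theorem\ldots''), with no demonstration. So there is nothing to compare against on the paper's side; your Riesz-plus-orthogonal-decomposition route is a standard and complete justification of the result the authors take for granted.

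One small simplification you can make in step~(ii): the ``delicate'' converse direction does not actually need the normal-trace machinery for $H(\operatorname{div},\Omega)$. Once you know $\int_\Omega \bv\cdot\nabla\phi\,\D\bx=0$ for every $\phi\in\mathcal{C}_c^\infty(\Omega)$, observe that $q\mapsto\int_\Omega \bv\cdot\nabla q\,\D\bx$ is continuous on $H_0^1(\Omega)$ (because $\bv\in L^2$) and vanishes on the dense subspace $\mathcal{C}_c^\infty(\Omega)$; hence it vanishes on all of $H_0^1(\Omega)$. This removes the only place where you invoked boundary regularity, so the argument goes through for any bounded open $\Omega$ (Poincar\'e's inequality being the sole structural assumption).
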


\section{Linear stability analysis of the Hele-Shaw model}\label{app-b}

The Hele-Shaw model writes 
\begin{equation}
	-\nabla \cdot u=\Delta p =0  \qquad \textrm{ in } \Omega(t) \, ,\label{eq:pression_HS}
\end{equation}
where $\Omega(t)$ is the domain occupied by the fluid and $u=-\nabla p$ is the fluid velocity.

The dynamic boundary condition (or normal force balance) is the given by the Young-Laplace pressure drop
\begin{equation}
	p =\sigma \kappa \qquad  \textrm{ on } \partial \Omega(t) \, ,\label{eq:pression_bord_HS}
\end{equation}
where $p$ is the fluid pressure (minus a constant), $\sigma$ is the surface tension and $\kappa$ the local curvature. The free-boundary evolves with the kinematic condition, which states that the normal velocity of the sharp interface equals the normal velocity of the fluid $ V_n =u\cdot \bn $, recalling that $u=-\nabla p$, in terms of the pressure this gives
\begin{equation}
	V_n =-\nabla p \cdot \bn  \qquad \textrm{ on } \partial \Omega(t) \label{eq:vitesse_HS}\, ,
\end{equation}
where $\bn$ is the unit vector pointing outward.

We start with the stationary solution of a disk $\Omega_0 = \{(r, \theta)|r \le  R_0\}$. The curvature across the boundary is $\kappa = 1/R_0$ and the solution of the pressure is simply $p_0 = \sigma /R_0$ in $\Omega_0$.

We perturb the edge of the domain so that it is defined in terms of the polar angle 
\[R(\theta, t) = R_0  + \varepsilon R_1(\theta, t)\, .\]
We want to analyze how the perturbation $R_1(\theta, t)$ evolves in time; in particular we consider perturbations of the form $R_1(\theta, t) = R_m(t) \cos(m\theta)$, where $R_m(t)$ is the amplitude and $m$ is the wave number. If, for a given $m$, $R_m(t)$ grows in time, that particular wave number is unstable; if $R_m(t)$ decreases, that wave number is stable.

Since $\varepsilon$ is small, we will neglect all the terms which are proportional to $\varepsilon ^n$, $n > 1$. Assume that the pressure can be expanded as follows:
\[p(r,\theta, t) = p_0 + \varepsilon p_1(r,\theta, t)\, ,\]
with $p_1(r,\theta, t) = p_m(r, t) \cos(m\theta)$.

Since $p_0$ satisfies Laplace equation, $p_1$ will satisfy the Laplace equation $\Delta p_1 = 0$. In polar coordinates, this writes
\begin{equation}
	\left( \partial_{r}^2 + r^{-1}\partial_r  - r^{-2}m^2  \right)p_m(r, t) = 0 \, ,
\end{equation}
which is solved by 
\[
p_m(r,t) = A_m(t)r^m +B_m(t)r^{-m}\, .
\] 
Discarding singularities at $r = 0$ we set $B_m(t) = 0$.

Concerning the curvature a direct computation gives that
\[
\kappa= \frac{1}{R_0} +\varepsilon \kappa_1  \, , 
\]
with 
\begin{equation}\label{eq:pert_courbure}
	\kappa_1=-\frac{R_1+\partial_\theta ^2 R_1}{R_0^2}=\frac{m^2-1}{R_0^2}R_m(t)\cos(m\theta)\, .
\end{equation}

From the dynamic boundary condition \eqref{eq:pression_bord_HS} it follows that 
\[ 
(p_0 + \varepsilon p_1)_{|R_0(1+\varepsilon R_1)} = \sigma \kappa_{|R_0(1+\varepsilon R_1)} = \sigma \left(  \frac{1}{R_0} +\varepsilon \frac{m^2-1}{R_0^2}R_m(t)\cos(m\theta) \right)\, ,
\]
hence
\begin{equation}\label{eq:A_m}
	p_{1|R_0(1+\varepsilon R_1)}= \sigma  \frac{m^2-1}{R_0^2}R_m(t)\cos(m\theta) \, .
\end{equation}
On the other hand, neglecting all the terms which are proportional to $\varepsilon ^n$, $n > 1$, we obtain
\[
p_{1|R_0(1+\varepsilon R_1)} = p_m(r=R_0,t) \cos (m\theta)=A_m(t)R_0^m \, ,  
\]
hence
\[
A_m(t)= \frac{\sigma (m^2-1)}{R_0^{m+2}} R_m(t)\cos(m\theta)\, . 
\]

We can now analyze the time evolution of the interface. From the kinematic condition \eqref{eq:vitesse_HS} it follows that $\varepsilon \frac{\D }{\D t} R_1 = -\varepsilon \partial _r p_1$ on the boundary. Hence,
\[
\frac{\D }{\D t} R_1= -\partial_r \left(A (t)r^m\right)_{|r=R_0} = -\sigma R_0^{-3}m(m^2- 1)R_m(t)\cos(m\theta)\, .
\]
Consequently, one has
\[
\frac{\D }{\D t} R_m= -\partial_r \left(A (t)r^m\right)_{|r=R_0} = -\sigma R_0^{-3}m(m^2- 1)R_m(t)\, .
\]

The cubic dispersion relation $\omega = -R^{-3}\sigma m(m^2 - 1)$ shows that the modes $m = 0$ (expansion $m_0$
of the circular droplet) and $m = 1$ (infinitesimal translation of the circular droplet) are marginally stable, alluding to mass conservation and translational symmetry. On the other hand, all $m \ge  2$ modes (morphological deformations) are stabilized by the surface tension $\sigma$.

Indeed, the solution of the last equation is then
\[
R_m(t) = R_m(t = 0) e^{\omega t}\, .
\]
The quantity $\omega$ is often called growth rate, since it determines the "growth" of the perturbation $R_m$. If $\omega > 0$, $R_m$ grows, and the perturbation is unstable; otherwise, it is stable. Stability obviously depends on the wave number of the perturbation, with the perturbations characterized by large $m$ being stable (due to the term proportional to $-m^3$). This is what we expect due to known stabilizing effect of surface tension.

This result expresses the competition between the destabilizing effect of viscosity contrast (destabilizing), and the surface tension (stabilizing).

\bibliographystyle{plain}
\def\cprime{$'$} \def\lfhook#1{\setbox0=\hbox{#1}{\ooalign{\hidewidth
  \lower1.5ex\hbox{'}\hidewidth\crcr\unhbox0}}} \def\cprime{$'$}
  \def\cprime{$'$} \def\cprime{$'$} \def\cprime{$'$} \def\cprime{$'$}

\end{document}